\newcolumntype{L}[1]{>{\raggedright\let\newline\\\arraybackslash\hspace{0pt}}m{#1}}
\newcolumntype{C}[1]{>{\centering\let\newline\\\arraybackslash\hspace{0pt}}m{#1}}
\newcolumntype{R}[1]{>{\raggedleft\let\newline\\\arraybackslash\hspace{0pt}}m{#1}}
\DeclareMathOperator*{\esssup}{ess\,sup}
\DeclareMathOperator*{\essinf}{ess\,inf}
\newcommand\Mtilde{\stackrel{\, \sim}{\smash{M}\rule{0pt}{1.2ex}}}
\newcommand\Mtildemid{\stackrel{\, \sim}{\smash{M}\rule{0pt}{0.9ex}}}
\newcommand\Mtildelow{\stackrel{\, \sim}{\smash{M}\rule{0pt}{0.8ex}}}
\newcommand{\supp}{\mathrm{supp}}
\newcommand{\diam}{\text{diam}}
\newcommand{\Tr}{\mathrm{Tr}}
\newcommand{\HS}{\mathrm{HS}}
\newcommand{\opnorm}[1]{{\left\vert\kern-0.25ex\left\vert\kern-0.25ex\left\vert #1 
	\right\vert\kern-0.25ex\right\vert\kern-0.25ex\right\vert}}
\newcommand{\Vh}{V_h}
\newcommand{\VH}{V_H}
\newcommand{\Vf}{V_{\mathrm{f}}}
\newcommand{\VflK}{V_{\mathrm{f},\ell}^{\scalebox{0.6}{$K$}}}
\newcommand{\Vms}{V^{\mathrm{ms}}}
\newcommand{\Vmsl}{V^{\mathrm{ms}}_\ell}
\newcommand{\Th}{T_h}
\newcommand{\Nh}{N_h}
\newcommand{\NH}{\mathcal{N}_H}
\newcommand{\HdoneBochner}{L_2(\Omega; \dot{H}^1)}
\newcommand{\U}{\mathcal{U}}
\renewcommand{\H}{L_2(D)}
\newcommand{\V}{H^1_0(D)}
\newcommand{\Vdual}{H^{-1}}
\newcommand{\Rmsl}{R^{\mathrm{ms}}_\ell}
\newcommand{\Rf}{R_{\mathrm{f}}}
\newcommand{\Rfl}{R_{\mathrm{f},\ell}}
\newcommand{\RflK}{R_{\mathrm{f},\ell}^{\scalebox{0.6}{$K$}}}
\newcommand{\Pmsl}{P^{\mathrm{ms}}_\ell}
\newcommand{\Pmslh}{P_{\ell,h}^{\mathrm{ms}}}
\newcommand{\Amsl}{\Lambda^\mathrm{ms}_\ell}
\newcommand{\Tmsl}{T_\ell^\mathrm{ms}}
\newcommand{\I}{\mathfrak{I}}
\newcommand{\Ah}{\Lambda_h}
\newcommand{\Ph}{P_h}
\newcommand{\PH}{P_H}
\newcommand{\N}{N}
\newcommand{\Id}{\text{Id}}
\newcommand{\inttn}[1]{\int_{t_{n-1}}^{t_n} #1\, \mathrm{d}W(s)}
\newcommand{\inttj}[1]{\int_{t_{j-1}}^{t_j} #1\, \mathrm{d}W(s)}
\newcommand{\discd}{\bar{\partial}_t}
\newcommand{\tj}{t_j}
\newcommand{\tjprev}{t_{j-1}}
\newcommand{\tn}{t_n}
\newcommand{\tnprev}{t_{n-1}}
\newcommand{\rhon}{\rho^n}
\newcommand{\en}{e^n}
\newcommand{\ej}{e^j}
\newcommand{\rhoj}{\rho^j}
\newcommand{\Un}{U_n}
\newcommand{\Uj}{U_j}
\newcommand{\Unprev}{U_{n-1}}
\newcommand{\Xhn}{X^n_{h}}
\newcommand{\Xhnprev}{X^{n-1}_{h}}
\newcommand{\Unmsl}{U^\mathrm{ms}_{\ell,n}}
\newcommand{\Xnmsl}{X^n_{\mathrm{ms},\ell}}
\newcommand{\Xnmslprev}{X^{n-1}_{\mathrm{ms}, \ell}}
\newcommand{\Xomsl}{X^0_{\mathrm{ms}, \ell}}
\newcommand{\Xnmslm}{X^{n,m}_{\mathrm{ms}, \ell}}
\newcommand{\XnmslHJ}{X^n_{\mathrm{ms},\ell, J}}
\newcommand{\XnmslHj}{X^n_{\mathrm{ms},\ell, j}}
\newcommand{\XnmslHjprev}{X^n_{\mathrm{ms}, \ell, j-1}}
\newcommand{\XomslHj}{X^n_{\mathrm{ms}, \ell, 0}}
\newcommand{\XnmslHjm}{X^{n,m}_{\mathrm{ms},\ell, j}}
\newcommand{\XnmslHjprevm}{X^{n,m}_{\mathrm{ms}, \ell, j-1}}
\newcommand{\XomslHjm}{X^{n,m}_{\mathrm{ms}, \ell, 0}}
\newcommand{\Ekh}{E_{k,h}}
\newcommand{\Ekmsl}[1]{E^\mathrm{ms}_{k,\ell,#1}}
\newcommand{\Fmsl}[1]{F^\mathrm{ms}_{\ell,#1}}
\renewcommand{\d}{\mathrm{d}}
\newcommand{\dt}{\mathrm{d}t}
\newcommand{\ds}{\mathrm{d}s}
\newcommand{\dW}{\mathrm{d}W}
\newcommand{\dX}{\mathrm{d}X}
\newcommand{\Xh}{X_h}
\newcommand{\vms}{v_\mathrm{ms}}
\newcommand{\vf}{v_\mathrm{f}}
\newcommand{\X}{X}
\newcommand{\intt}{\int_0^t}
\newcommand{\Xo}{X_0}
\newcommand{\E}[1]{\mathbb{E}[#1]}
\newcommand{\EBig}[1]{\mathbb{E}\bigg[#1\bigg]}
\renewcommand{\P}{\mathbb{P}}
\newcommand{\R}{\mathbb{R}}
\newcommand{\HdoneBochnerNorm}[1]{\| #1 \|_{L_2(\Omega; \dot{H}^1)}}
\newcommand{\intHdoneBochnerNorm}[1]{\Big\| #1 \Big\|_{L^2(\Omega; \dot{H}^1)}}
\newcommand{\HSnorm}[1]{\| #1 \|_{\mathrm{HS}}}
\newcommand{\calB}{\mathcal{B}}
\newcommand{\calF}{\mathcal{F}}
\newcommand{\calH}{\mathcal{H}}
\newcommand{\calD}{\mathcal{D}}
\newcommand{\Kh}{\mathcal{K}_h}
\newcommand{\KH}{\mathcal{K}_H}
\newcommand{\ord}{\mu}
\definecolor{myBlue}{RGB}{30,144,255}
\definecolor{myGreen}{RGB}{69,169,0}
\definecolor{myRed}{RGB}{165,12,42} 
\definecolor{myOrange}{RGB}{225,92,22} 
\definecolor{color2}{RGB}{255, 126, 126}
\definecolor{color3}{RGB}{0, 100, 0}
\definecolor{color1}{RGB}{176, 226, 255}
\definecolor{crimson2143940}{RGB}{214,39,40}
\definecolor{darkgray176}{RGB}{176,176,176}
\definecolor{darkorange25512714}{RGB}{255,127,14}
\definecolor{forestgreen4416044}{RGB}{44,160,44}
\definecolor{lightgray204}{RGB}{204,204,204}
\definecolor{mediumpurple148103189}{RGB}{148,103,189}
\definecolor{sienna1408675}{RGB}{140,86,75}
\definecolor{steelblue31119180}{RGB}{31,119,180}
\definecolor{finefem}{RGB}{200,150,200}
\tikzset{cross/.style={cross out, draw=black, minimum size=2*(#1-\pgflinewidth), inner sep=0pt, outer sep=0pt},
cross/.default={0.6ex}}
\newtheorem{theorem}{Theorem}[section]
\newtheorem{lemma}[theorem]{Lemma}
\theoremstyle{definition}
\newtheorem{remark}[theorem]{Remark}
\numberwithin{theorem}{section}
\numberwithin{equation}{section}
\numberwithin{table}{section}
\numberwithin{figure}{section}
\definecolor{darkgreen}{rgb}{0,.6,0}
\begin{document}
\title[LOD for a multiscale parabolic SPDE]{Localized orthogonal decomposition for a multiscale parabolic stochastic partial differential equation}
\author[A.~Lang]{Annika~Lang$^\dagger$}
\author[P.~Ljung]{Per~Ljung$^\dagger$}
\author[A.~M{\aa}lqvist]{Axel~M{\aa}lqvist$^\dagger$}
\address{${}^{\dagger}$ Department of Mathematical Sciences, Chalmers University of Technology and University of Gothenburg, 412 96 G\"oteborg, Sweden}
\email{annika.lang@chalmers.se, perlj@chalmers.se, axel@chalmers.se}
\date{\today}
\keywords{}
\thanks{
	Acknowledgement. The authors thank Stig Larsson for helpful comments and fruitful discussions. The work of AL was partially supported by the Swedish Research Council (VR) through grant no.\ 2020-04170, by the Wallenberg AI, Autonomous Systems and Software Program (WASP) funded by the Knut and Alice Wallenberg Foundation, and by the Chalmers AI Research Centre (CHAIR). The work of AM was partially supported by the Swedish Research Council (VR) through grant no. 2019-03517. 
}
%
\begin{abstract}

A multiscale method is proposed for a parabolic stochastic partial differential equation with additive noise and highly oscillatory diffusion. The framework is based on the localized orthogonal decomposition (LOD) method and computes a coarse-scale representation of the elliptic operator, enriched by fine-scale information on the diffusion. Optimal order strong convergence is derived. The LOD technique is combined with a (multilevel) Monte-Carlo estimator and the weak error is analyzed. Numerical examples that confirm the theoretical findings are provided, and the computational efficiency of the method is highlighted.

\end{abstract}
%
%
\maketitle
%

\section{Introduction}\label{s:intro}

We consider numerical approximations of a multiscale parabolic stochastic partial differential equation (SPDE) with additive noise. The equation takes the general form
\begin{align}
	\dX(t) + \Lambda \X(t) \, \dt = G\, \dW(t),
	\label{eq:introduction_equation}
\end{align}
with initial condition $\X(0) = \Xo$, posed on a polygonal (or polyhedral) domain $D\subset \mathbb{R}^{d}$, $d=2,3$. The diffusion operator $\Lambda$ is defined as $\Lambda := -\nabla \cdot A\nabla$, where in multiscale applications, the coefficient~$A$ varies rapidly in space. Such effects arise when modeling the physical behavior of, for instance, porous media or composite materials.

Computing samples and quantities of interest such as moments of the solution to~\eqref{eq:introduction_equation} becomes computationally heavy since convergence is only achieved on very fine space grids resolving the rapidly varying diffusion coefficient. To overcome this bottleneck and to guarantee convergence on coarse grids and of more efficient multilevel Monte-Carlo methods, we use the localized orthogonal decomposition (LOD) method for the first time in the context of SPDEs. In order to show convergence in the stochastic setting, we derive new error estimates for LOD methods in the deterministic setting that take the roughness of stochastic equations into account.

In general simulation of stochastic partial differential equations is of importance to a wide range of real-life applications. This includes models in physics, chemistry, biology and mathematical finance (see, e.g.,~\cite{LorPS14} for more applications, and~\cite{DapZ14, liu2015stochastic, LotR17} for several, more precise examples). In particular, the equation in~\eqref{eq:introduction_equation} with multiscale effects arises, for instance, when modeling heat flow in an inhomogeneous (e.g., composite) material with uncertainties in measurements of the source data. The theory of these equations is by now well-established. For results on existence and uniqueness, regularity, asymptotic behavior, and further properties of the solution, we refer to e.g.~\cite{DapZ14, Kru14}.

From a numerical standpoint, SPDEs can be approximated by means of the standard finite element discretization. In fact, in~\cite{Yubin04, Yub05}, the author derives convergence for the strong error of~\eqref{eq:introduction_equation} (see~\cite{Kru14} for extension to the semi-linear case), i.e., the error in root-mean square or more general in~$L_p$ over the probability space. However, the results in these papers rely heavily on spatial $H^2$-regularity of the solution, which is ill-suited for multiscale applications. More precisely, if we denote by $\varepsilon$ the scale at which the diffusion $A$ oscillates, then the error depends on $\|X(t)\|_{H^2}\sim \varepsilon^{-1}$ for the deterministic part of~\eqref{eq:introduction_equation}. In other words, this means that the corresponding finite element mesh width $h$ must satisfy $h < \varepsilon$, i.e., resolve the variations of $A$, before reaching the region of convergence. In terms of computation, this quickly becomes challenging, both in complexity and memory consumption.

In recent years, research has begun to emerge in the area of multiscale SPDEs. For instance, in~\cite{HaiK12}, the authors study analytical homogenization of a parabolic SPDE, and~\cite{BloHP07} reduces the complexity of computing solutions to~\eqref{eq:introduction_equation} by means of so-called amplitude equations. From a numerical perspective, \cite{Bre12, Bre13, Bre20} consider homogenization and averaging principles for stochastic multiscale reaction-diffusion equations and apply a numerical scheme based on the framework of heterogeneous multiscale methods (see, e.g.,~\cite{AbdEEV12, EE03}). However, since the analysis is based on analytical homogenization, it requires additional assumptions on the diffusion coefficient, such as scale separation and periodicity. To overcome these restrictions, several methods based on numerical homogenization have been developed for deterministic PDEs, where prominent examples include the LOD method~\cite{HenP13, MalP14}, generalized multiscale finite element method (GMsFEM)~\cite{BabL11, BabO83, EfeGH13}, gamblets~\cite{Owh17}, and the Super-LOD method~\cite{HauP21, FreHP21}. For a general overview on numerical homogenization, see also~\cite{AltHP21, MalP20, OwhS19}. We use the LOD method for the first time in the context of SPDEs.

The LOD method was originally presented in~\cite{MalP14} and is based on the variational multiscale method (see, e.g.,~\cite{Hug95, HugS96, HugFMQ98}). By now, the technique is well-established and has been thoroughly analyzed for a wide range of problems. Of particular interest for the work in this paper is the LOD method applied to parabolic-type equations, mainly covered in~\cite{MalP18, LjuMM22, MalP17, AltCMPP18}. For a general and rigorous introduction to the LOD method, we refer to~\cite{MalP20}. In short, the LOD technique utilizes a subspace decomposition, where the solution space is split into a coarse-scale and a fine-scale part, respectively. On the fine scale, comprehensive information about the diffusion is computed, and subsequently incorporated into the coarse scale to construct a modified version of the standard finite element space. Consequently, this modified space yields enhanced approximation properties while the computational cost maintains comparable to that of standard finite elements on coarse grids.

In terms of computational complexity, we remark that constructing the modified space is feasible, but still challenging due to its fine-scale dependency. However, we emphasize that the LOD method is significantly improved when applied to evolution equations. This follows naturally since the fine-scale features, once computed, can be re-used for each time step in the temporal partition. Moreover, a well-known characteristic of the LOD methodology is the ability to compute solutions for multiple different source functions, since the definition of the modified space solely depends on the diffusion operator. In fact, the re-usability gains an additional level of interest in the area of SPDEs. Here, solutions are characterized by their different moments, generally approximated by many samples using (multilevel) Monte-Carlo estimators (see~\cite{BarLS13, LanP17}). A well-known issue with this approach is the slow convergence of Monte-Carlo methods in the number of samples, requiring us to solve~\eqref{eq:introduction_equation} for a large number of realizations. In total, by computing fine-scale features once, our (coarse-scale) modified space can be re-used in each time step for any such realization of~\eqref{eq:introduction_equation}. Furthermore, the LOD technique can be applied to several discretization layers, such that it easily can be combined with the multilevel Monte-Carlo framework. Later, in Section~\ref{s:numerical_examples}, we provide additional emphasis and details on the computational complexity for the main method of this paper.

In combination with the spatial discretization, we further apply a standard backward Euler time scheme to define a fully discretized numerical method. For the full scheme, we derive a priori strong convergence of optimal order with respect to the coarse mesh size. In general, the analysis is based on techniques from finite element theory on parabolic PDEs and SPDEs (see \cite{thomee97} and \cite{Yubin04, Yub05, Kru14}, respectively) and the LOD method for parabolic PDEs (see \cite{MalP18}). The theoretical results are confirmed by numerical examples.

The remaining paper is outlined as follows. In Section~\ref{s:model_problem}, we introduce the model problem and discuss results on regularity and standard finite elements. Section~\ref{s:lod} defines the LOD methodology and composes the main numerical scheme of this paper. In Section~\ref{s:error_analysis}, the strong error is analyzed, for which optimal order convergence in the spatial sense is derived. This is followed by a discussion on the weak error and its estimation by Monte-Carlo methods in Section~\ref{s:weak_error}. In Section~\ref{s:numerical_examples}, we conclude with numerical examples that confirm the theoretical findings for the strong and weak error, respectively, and discuss the computational benefits of the LOD method applied to SPDEs.

\section{Model problem}\label{s:model_problem}

We consider the stochastic partial differential equation
\begin{alignat}{2}
	\dX(t) + \Lambda \X(t)\, \dt &= G\, \dW(t), \quad &&\text{in $D\times (0,T]$}, \label{eq:spde} \\
	\X(t) &= 0, \quad &&\text{on $\partial D \times (0,T]$}, \nonumber \\
	\X(0) &= \Xo, \quad &&\text{in $D$}, \nonumber
\end{alignat}
where $T > 0$ and $D$ is a polygonal (or polyhedral) domain in $\R^d$, $d=2,3$, with boundary $\partial D$ on which we assume homogeneous Dirichlet boundary conditions, and $\Xo$ is the (possibly stochastic) initial value. In this paper, we assume an operator $\Lambda$ of the form $\Lambda := -\nabla \cdot A \nabla$, where the diffusion coefficient $A$ is highly oscillatory in space, but independent of time. Moreover, the diffusion $A\in L_\infty(D; \R^{d\times d})$ is assumed to be symmetric and uniformly elliptic, i.e.,
\begin{align}
0 < \alpha_- := \essinf_{x\in D} \inf_{v\in \R^d\backslash\{0\}} \frac{A(x)v\cdot v}{v\cdot v} < \esssup_{x\in D} \sup_{v\in \R^d\backslash\{0\}} \frac{A(x)v\cdot v}{v\cdot v} =: \alpha_+ < +\infty.
\end{align}

In this section, we mainly focus on results for the solution to~\eqref{eq:spde}, as well as discuss known results for an approximate solution by means of the finite element method. First, we present preliminaries and notation that are used throughout the paper.

\subsection{Preliminaries and notation}

 Let $L(\U;\calH)$ denote the space of linear bounded operators between two separable Hilbert spaces $\U$ and $\calH$, with the short-hand notation $L(\calH) := L(\calH;\calH)$ in the case $\U=\calH$. Furthermore, denote by $L_N^+(\U)$ the space of all nonnegative, symmetric, nuclear operators on $\U$. We assume $W$ to be a $Q$-Wiener process with covariance operator $Q\in L_N^+(\U)$, defined on a filtered probability space $(\Omega, \calF, \{\calF_t\}_{t>0}, \mathbb{P})$ satisfying the usual conditions. It is well-known (see, e.g.,~\cite[Proposition 4.3]{DapZ14}) that such a process can be expressed by its Karhunen--Loève expansion
 \begin{align}
 	W(t) = \sum_{i=1}^\infty \sqrt{\lambda_i} \beta_i(t)e_i,
 	\label{eq:Karhunen-Loeve}
 \end{align}
where $\{e_i\}_{i=1}^\infty$ is an eigenbasis of $Q$ with corresponding eigenvalues $\{\lambda_i\}_{i=1}^\infty$, and $\{\beta_i\}_{i=1}^\infty$ is a sequence of mutually independent, real-valued Brownian motions. Moreover, since $Q\in L_N^+(\U)$, it holds that $Q$ is a trace class operator, such that
\begin{align}
	\Tr(Q) := \sum_{i=1}^\infty \langle Qe_i, e_i \rangle_\U < +\infty.
\end{align}
 
Next, we introduce $\HS(\U;\H)$ as the space of Hilbert--Schmidt operators between the two Hilbert spaces $\U$ and $\H$, with norm
\begin{align}
	\HSnorm{T}^2 := \sum_{i=1}^\infty \|T\phi_i\|^2,
\end{align}
where $\{\phi_i\}_{i=1}^\infty$ is an arbitrary ON-basis of $\U$, and $\|\cdot\|$ denotes the standard $\H$-norm. Furthermore, we use the space $L_2^0 := \HS(Q^{1/2}(\U);\H)$, with norm
\begin{align}
	\|\psi\|^2_{L_2^0} := \sum_{i=1}^\infty \|\psi Q^{1/2} \phi_i\|^2.
\end{align}

Next, since the operator $\Lambda$ (with incorporated homogeneous Dirichlet boundary conditions) is self-adjoint, the spectral theorem ensures a sequence of positive, non-decreasing eigenvalues $\{\lambda_i\}_{i=1}^\infty$ with corresponding eigenfunctions $\{\varphi_i\}_{i=1}^\infty$ that form an orthonormal basis for $\H$. For $s\geq 0$, we define fractional powers of $\Lambda$ by
\begin{align}
	\Lambda^{s/2}v := \sum_{i=1}^\infty \lambda_i^{s/2} (v, \varphi_i)\varphi_i,
\end{align}
and introduce the space $\dot{H}^s := \calD(\Lambda^{s/2})$, with norm given by
\begin{align}
	|v|_s := \|\Lambda^{s/2}v\| = \Big(\sum_{i=1}^\infty \lambda_i^s (v,\varphi_i)^2 \Big)^{1/2},\label{eq:s_norm_definition}
\end{align}
where $\calD(\Lambda^{s/2})$ denotes the domain of the operator $\Lambda^{s/2}$. In the frequently used case $s=1$, it holds that $\dot{H}^1 = H^1_0$ (see~\cite[Lemma~3.1]{thomee97}), and $|v|_1 = a(v,v)^{1/2}$, where $a(\cdot, \cdot)$ is the bilinear form obtained from $\Lambda$, defined by the relation
\begin{align}
	\langle \Lambda v, w \rangle = a(v,w), 
\end{align}
for all $v,w\in \V$. Here, $\langle \cdot, \cdot \rangle:= {}_{\Vdual}\langle \cdot, \cdot \rangle_{H^1_0}$ denotes the dual pairing, characterized by the Gelfand triple $\V \subset \H \cong \H \subset H^{-1}(D)$. Moreover, the operator $-\Lambda$ is the infinitesimal generator of an analytic semigroup (see~\cite[Appendix~B.2]{Kru14}). At last, we define the Bochner space
\begin{align}
	L_2(\Omega; \dot{H}^s) := \Big\{ v \in \dot{H}^s \colon \E{|v|^2_s} = \int_\Omega |v(\omega)|^2_s\, \mathrm{d}\P(\omega) < +\infty  \Big\},
\end{align}
with norm $\|v\|_{L_2(\Omega; \dot{H}^s)} := (\E{|v|^2_s})^{1/2}$. Throughout the paper, we frequently abbreviate the Bochner space by neglecting the spatial domain and write, i.e., $L_2(\Omega; L_2) := L_2(\Omega; L_2(D))$.

\subsection{Regularity}

We continue by analyzing the regularity of the solution to our model equation~\eqref{eq:spde}. For this purpose, let $E$ denote the semigroup generated by the operator $-\Lambda$, i.e,
\begin{align}
	E(t)v = e^{-t\Lambda}v = \sum_{i=1}^{\infty} e^{-\lambda_i t}\hat{v}_i\varphi_i, \label{eq:heat_semi_group}
\end{align}
where $\hat{v}_i := (v,\varphi_i)$, and $\{\lambda_i, \varphi_i\}_{i=1}^\infty$ are the eigenpairs of the operator $\Lambda$ (see~\cite{KruL11}).

\begin{remark}\label{rem:elliptic_regularity}
	We emphasize that although the semigroup generated by $-\Lambda$ is analytic, there are several properties of the semigroup that need to be handled with care. This is since many of the properties implicitly utilize elliptic regularity, where the constant is dependent on the high variations in $A$, which in the case of multiscale materials may become prohibitively large.
\end{remark}

The necessary properties for the semigroup are stated in the following lemma.
\begin{lemma}\label{lem:semigroup_properties}
	The semigroup $E$ defined in \eqref{eq:heat_semi_group} is a $C_0$-semigroup of contractions, i.e., it holds that $\|E(t)\|_{L(L_2(D))} \leq 1$ for $t\geq 0$.
	Moreover, $E$ satisfies the following:
	\begin{enumerate}
		\item For every $v\in \mathcal{D}(\Lambda^\alpha)$, $\alpha \in \mathbb{R}$, the operators $\Lambda^\alpha$ and $E(t)$ commute, i.e., for all $t\geq 0$,
		\begin{align}
			E(t)\Lambda^\alpha v = \Lambda^\alpha E(t)v.	
		\end{align}
		\item For $v\in L_2(D)$, it holds that
		\begin{align}
			\int_0^t \| \Lambda^{1/2} E(s) v \|^2\, \ds \leq \frac{1}{2}\|v\|^2, \quad t\geq 0.
		\end{align}
		\item For $v\in \dot{H}^s$, it holds that
		\begin{align}
			|D^\ell_t E(t)v|_s \leq Ct^{-\ell}|v|_s, \quad s = 0,1,
			\label{eq:semigroup_derivative}
		\end{align}
		for $t>0$, where $D^\ell_t$ denotes the $\ell$'th order time derivative, and the constant $C$ is independent of the variations in the diffusion $A$.
	\end{enumerate}
\end{lemma}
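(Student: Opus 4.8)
The plan is to reduce every assertion to the spectral representation~\eqref{eq:heat_semi_group}, so that all bounds become elementary estimates on the scalar functions $\lambda\mapsto e^{-\lambda t}$ and their derivatives, with constants depending only on the order of differentiation. This is precisely the route that keeps every constant independent of the variations in $A$, as flagged in Remark~\ref{rem:elliptic_regularity}: we never invoke the generic analytic-semigroup smoothing estimates, whose constants implicitly carry elliptic-regularity information.

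First I would establish the $C_0$-contraction property. Since $\{\varphi_i\}$ is orthonormal in $\H$, Parseval's identity gives
\begin{align}
	\|E(t)v\|^2 = \sum_{i=1}^\infty e^{-2\lambda_i t}\hat{v}_i^2 \leq \sum_{i=1}^\infty \hat{v}_i^2 = \|v\|^2,
\end{align}
because $0<\lambda_i$ forces $e^{-2\lambda_i t}\leq 1$ for $t\geq 0$; this yields $\|E(t)\|_{L(\H)}\leq 1$. Strong continuity at $t=0$ follows from dominated convergence applied to $\|E(t)v-v\|^2 = \sum_i (e^{-\lambda_i t}-1)^2\hat{v}_i^2$, each summand being bounded by $\hat{v}_i^2$ and tending to $0$. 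Part~(1) is then immediate from the same representation: both $E(t)$ and $\Lambda^\alpha$ act diagonally in the basis $\{\varphi_i\}$, so for $v\in\calD(\Lambda^\alpha)$ both sides equal $\sum_i \lambda_i^\alpha e^{-\lambda_i t}\hat{v}_i\varphi_i$. For part~(2) I would integrate termwise, justified by Tonelli's theorem since all terms are nonnegative,
\begin{align}
	\int_0^t \|\Lambda^{1/2}E(s)v\|^2\,\ds = \sum_{i=1}^\infty \lambda_i \hat{v}_i^2 \int_0^t e^{-2\lambda_i s}\,\ds = \sum_{i=1}^\infty \hat{v}_i^2\,\frac{1-e^{-2\lambda_i t}}{2} \leq \frac{1}{2}\|v\|^2.
\end{align}

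Part~(3) is the substantive estimate, and the step I expect to require the most care. Differentiating the series termwise (legitimate for $t>0$, since the factors $\lambda_i^{\ell}e^{-\lambda_i t}$ are uniformly bounded on compact subsets of $(0,\infty)$, which guarantees convergence of the differentiated series in $\H$ uniformly on such subsets) gives $D_t^\ell E(t)v = \sum_i (-\lambda_i)^\ell e^{-\lambda_i t}\hat{v}_i\varphi_i$, so that
\begin{align}
	|D_t^\ell E(t)v|_s^2 = \sum_{i=1}^\infty \lambda_i^s\big(\lambda_i^\ell e^{-\lambda_i t}\big)^2\hat{v}_i^2.
\end{align}
The crucial observation is the scale-free scalar bound: writing $x=\lambda_i t$ and using $\sup_{x\geq 0}x^\ell e^{-x} = \ell^\ell e^{-\ell}=:C_\ell$ (with $C_0=1$), one has $\lambda_i^\ell e^{-\lambda_i t} = t^{-\ell}(\lambda_i t)^\ell e^{-\lambda_i t}\leq C_\ell t^{-\ell}$ uniformly in $i$. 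Substituting and invoking the definition~\eqref{eq:s_norm_definition} of $|\cdot|_s$ yields
\begin{align}
	|D_t^\ell E(t)v|_s^2 \leq C_\ell^2\, t^{-2\ell}\sum_{i=1}^\infty \lambda_i^s\hat{v}_i^2 = C_\ell^2\, t^{-2\ell}|v|_s^2,
\end{align}
and taking square roots gives~\eqref{eq:semigroup_derivative}. The essential point—and the reason to argue spectrally rather than cite standard analytic-semigroup results—is that $C_\ell=\ell^\ell e^{-\ell}$ depends only on $\ell$, so the constant is manifestly independent of the eigenvalues $\{\lambda_i\}$ and hence of the variations in $A$, as required.
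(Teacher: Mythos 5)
Your proof is correct and follows essentially the same route as the paper's: everything is reduced to the eigenfunction expansion \eqref{eq:heat_semi_group}, with the contraction and parts (2)--(3) obtained from termwise estimates on $\lambda\mapsto\lambda^\ell e^{-\lambda t}$, exactly as in the paper (which likewise uses $s^k e^{-s}\leq C$ to keep the constant independent of $A$). The only cosmetic differences are that you verify part (1) directly from the diagonal action in the eigenbasis where the paper cites Pazy's Theorem 6.13, and you supply extra details (strong continuity at $t=0$, justification of termwise differentiation, the explicit constant $C_\ell=\ell^\ell e^{-\ell}$) that the paper leaves implicit.
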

\begin{proof}
	First of all, using the expansion \eqref{eq:heat_semi_group} we immediately have 
	\begin{align}
		\|E(t)\|^2_{L(L_2(D))} = \sup_{v\in L_2(D)\backslash\{0\}} \frac{\|E(t)v\|^2}{\|v\|^2} = \frac{\sum_{i=1}^\infty e^{-2\lambda_it}\hat{v}_i^2}{\sum_{i=1}^\infty \hat{v}_i^2} \leq 1,
	\end{align}
	since the eigenvalues of $\Lambda$ are positive. 
	
	The fact that $E(t)$ and $\Lambda^\alpha$ commute follows from an argument based on spectral theory for the operator $\Lambda^\alpha$ and is proven in \cite[Theorem 6.13]{pazy1983semigroups}. 
	
	For the remaining properties, we first note that
	\begin{align}
	|D^\ell_t E(t)v|_1^2 &= \|\Lambda^{1/2}D^\ell_t E(t)v\|^2 = \sum_{i=1}^\infty \lambda_i (D^\ell_t E(t)v, \varphi_i)^2 = \sum_{i=1}^\infty \lambda_i^{2\ell+1} e^{-2\lambda_it}\hat{v}_i^2,
	\end{align}
	where the second equality is the expression for the $\dot{H}^1$-norm defined in~\eqref{eq:s_norm_definition}, and the last follows from the eigenvalue expansion~\eqref{eq:heat_semi_group}. The property \textit{(2)} then follows by inserting this into the integral, such that
	\begin{align}
		\int_0^t \|\Lambda^{1/2}E(s)v\|^2\, \ds = \sum_{i=1}^\infty \hat{v}^2_i \int_0^t \lambda_ie^{-2\lambda_is}\, \ds = \sum_{i=1}^\infty \frac{\hat{v}^2_i}{2}  (1-e^{-2\lambda_it}) \leq \frac{1}{2}\|v\|^2.
	\end{align}
	For the property \textit{(3)}, the result in $|\cdot|_1$-norm follows by
	\begin{align}
		|D^\ell_t E(t)v|^2_1 &= \sum_{i=1}^\infty \lambda_i^{2\ell}e^{-2\lambda_it} \lambda_i \hat{v}_i^2 \leq Ct^{-2\ell}\sum_{i=1}^\infty \lambda_i\hat{v}_i^2 = Ct^{-2\ell}|v|^2_1,
	\end{align}
	where the inequality follows since $s^{k}e^{-s} \leq C$. The result in standard $L_2(D)$-norm follows similarly.
\end{proof}

Using the semigroup, we know from, e.g.,~\cite{DapZ14}, that the unique mild solution of \eqref{eq:spde} is given by
\begin{align}
	\X(t) = E(t)\Xo + \intt E(t-s) G\, \dW(s).
	\label{eq:mild_solution}
\end{align}
We may now derive the following result for the regularity of the mild solution. The proof is similar to the regularity result proven in~\cite[Theorem 2.1]{Yubin04}, but adapted to our setting and keeping track of the constants such that no blow-up appears due to the multiscale effects.

\begin{lemma}
	Assume that $G \in L_2^0$ and $\Xo \in \HdoneBochner$. Then, the mild solution $\X$ in~\eqref{eq:mild_solution} satisfies
	\begin{align}
		\HdoneBochnerNorm{\X(t)} \leq \HdoneBochnerNorm{\Xo} + \frac{1}{\sqrt{2}}\|G\|_{L_2^0}.
	\end{align}
\end{lemma}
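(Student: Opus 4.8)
The plan is to decompose the mild solution~\eqref{eq:mild_solution} into its deterministic and stochastic contributions and to estimate each separately. By the triangle inequality in $\HdoneBochner$,
\begin{align}
\HdoneBochnerNorm{\X(t)} \leq \HdoneBochnerNorm{E(t)\Xo} + \HdoneBochnerNorm{\intt E(t-s) G\, \dW(s)},
\end{align}
so it suffices to bound the first summand by $\HdoneBochnerNorm{\Xo}$ and the second by $\tfrac{1}{\sqrt 2}\|G\|_{L_2^0}$.

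For the deterministic term I would use that $\Lambda^{1/2}$ and $E(t)$ commute (property~(1) of Lemma~\ref{lem:semigroup_properties}) together with the contraction estimate $\|E(t)\|_{L(L_2(D))}\leq 1$. Pointwise in $\omega$ this gives
\begin{align}
|E(t)\Xo|_1 = \|\Lambda^{1/2}E(t)\Xo\| = \|E(t)\Lambda^{1/2}\Xo\| \leq \|\Lambda^{1/2}\Xo\| = |\Xo|_1,
\end{align}
and squaring, taking expectation, and then the square root yields $\HdoneBochnerNorm{E(t)\Xo}\leq\HdoneBochnerNorm{\Xo}$. Note that this step invokes no elliptic regularity, so no $A$-dependent constant enters.

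For the stochastic term the plan is to first move $\Lambda^{1/2}$ inside the stochastic integral and then apply the It\^o isometry for integrals against the $Q$-Wiener process. Writing $g_i := GQ^{1/2}\phi_i\in L_2(D)$ for an ON-basis $\{\phi_i\}_{i=1}^\infty$ of $\U$, the integrand here being deterministic, the isometry gives
\begin{align}
\E{\big| \intt E(t-s)G\, \dW(s) \big|_1^2}
&= \int_0^t \|\Lambda^{1/2}E(t-s)G\|^2_{L_2^0}\, \ds \\
&= \sum_{i=1}^\infty \int_0^t \|\Lambda^{1/2}E(t-s)g_i\|^2\, \ds,
\end{align}
where the interchange of the sum and the time integral is justified by Tonelli since all terms are nonnegative. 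Substituting $\tau = t-s$ and applying property~(2) of Lemma~\ref{lem:semigroup_properties} to each $g_i$ bounds every inner integral by $\tfrac12\|g_i\|^2$, so the total is at most $\tfrac12\sum_{i=1}^\infty\|GQ^{1/2}\phi_i\|^2 = \tfrac12\|G\|^2_{L_2^0}$. Taking square roots and combining with the deterministic estimate then gives the claim.

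The main obstacle is the rigorous justification of pulling the unbounded operator $\Lambda^{1/2}$ through the stochastic integral. I would handle this via the closedness of $\Lambda^{1/2}$ together with the smoothing of the analytic semigroup: for $s<t$ the composition $\Lambda^{1/2}E(t-s)$ is bounded, and property~(2) guarantees that $s\mapsto\Lambda^{1/2}E(t-s)G$ is square-integrable into $L_2^0$, which is exactly the integrability required for the commutation and for the It\^o isometry to apply. Everything else reduces to the contraction and integral bounds of Lemma~\ref{lem:semigroup_properties}, and, crucially, all constants remain independent of the variations in $A$.
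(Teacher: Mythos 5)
Your proposal is correct and follows essentially the same route as the paper: commuting $\Lambda^{1/2}$ with the contraction semigroup for the initial-value term, and the It\^o isometry together with property~(2) of Lemma~\ref{lem:semigroup_properties} for the stochastic convolution. The only cosmetic difference is that you split the two contributions by the triangle inequality, whereas the paper uses the exact $L_2(\Omega)$-orthogonality of the cross term (zero mean of the It\^o integral) before bounding each square; both yield the stated estimate, and your explicit basis expansion of the $L_2^0$-norm simply spells out a step the paper leaves implicit.
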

\begin{proof}
	At first, observe that
	\begin{align}
		\HdoneBochnerNorm{\X(t)}^2 = \HdoneBochnerNorm{E(t)\Xo}^2 + \intHdoneBochnerNorm{\intt E(t-s)G\, \dW(s)}^2,
	\end{align}
	since the mixed term vanishes due to the zero mean property of the Itô integral. For the first term, we note that
	\begin{align}
		\HdoneBochnerNorm{E(t)\Xo}^2 = \E{\|E(t)\Lambda^{1/2}\Xo\|^2} \leq \E{\|\Lambda^{1/2}\Xo\|^2} = \HdoneBochnerNorm{\Xo}^2.
	\end{align}
	Here, we first used the fact that $E(t)$ and $\Lambda^{1/2}$ commute, followed by the fact that $E$ is a semigroup of contractions, both stated in Lemma~\ref{lem:semigroup_properties}. For the second term, we get the bound
	\begin{align}
		\EBig{\Big\| \int_0^t E(t-s)G\, \dW(s) \Big\|^2_{\dot{H}^1}} &= \EBig{\Big\| \int_0^t \Lambda^{1/2}E(t-s)G\, \dW(s) \Big\|^2} \\
		&= \EBig{\int_0^t \| \Lambda^{1/2}E(t-s)G \|^2_{L_2^0}\, \ds} \\
		&\leq \frac{1}{2} \| G \|^2_{L_2^0},
	\end{align}
	where we have used the Itô isometry (see~\cite[Section~2.2]{Kru14}), and the semigroup property~\textit{(2)} from Lemma~\ref{lem:semigroup_properties}.
\end{proof}

\subsection{Finite element method}

We close this section by introducing the fully discrete Galerkin finite element approximation of~\eqref{eq:spde} and known results from the literature for the error between the exact solution and its finite element approximation. Let $\{\Kh\}_{h > 0}$ be a family of shape regular elements that form a partition of the spatial domain $D$. For any element $K\in \Kh$, we denote the mesh size of the element by $h_K := \diam(K)$, and the largest diameter in the partition by $h := \max_{K\in \Kh} h_K$. We define the standard finite element space consisting of continuous piecewise linear polynomials as
\begin{align}
S_h := \big\{v\in C(\bar{D}) : \ v\big|_K \text{ is a polynomial of partial degree $\leq 1$}, \forall K\in \Kh \big\},
\end{align}
and let $\Vh := S_h \cap H^1_0$, with dimension denoted by $N_h$. The semi-discrete version of \eqref{eq:spde} states: find $\Xh(t) \in \Vh$ for $t\in (0,T]$, such that
\begin{align}
\d \Xh(t)  + \Ah \Xh(t)\, \dt = \Ph G\, \dW(t)
\label{eq:spde_h}
\end{align}
with initial value $\Xh(0) = \Ph \Xo$, and where $\Ah:\Vh \rightarrow \Vh$ is the discrete version of the operator $\Lambda$, defined by the relation
\begin{align}
(\Ah v, w) = a(v, w),
\label{eq:Ah}
\end{align}
for all $v,w\in \Vh$, and $\Ph:L_2(D) \rightarrow \Vh$ denotes the standard $L_2(D)$-projection onto $\Vh$, i.e, for all $w\in \Vh$, it holds that
\begin{align}
	(\Ph v, w) = (v, w).
\end{align}

For the discretization of the temporal domain $[0,T]$, let $0=:t_0 < t_1 < \ldots < t_N := T$ be a partition with uniform time step $k = \tn - \tnprev$. We remark that a uniform partition is chosen for simplicity, and refer to classical finite element results for the choice of varying time step. Let $\Xhn$ be the approximation of $\X(\tn)$. The backward Euler scheme for~\eqref{eq:spde_h} is defined as
\begin{align}
	\Xhn - \Xhnprev + k\Ah \Xhn = \inttn{\Ph G}.
	\label{eq:original_BE}
\end{align}
Alternatively, let $\Ekh:= (I+k\Ah)^{-1}$, and we can write \eqref{eq:original_BE} as
\begin{align}
	\Xhn = \Ekh \Xhnprev + \inttn{\Ekh \Ph G}.
	\label{eq:fully_discrete_galerkin}
\end{align}
Furthermore we iterate this expression which yields a discrete analogy to Duhamel's principle, namely
\begin{align}
	\Xhn = \Ekh^n \Ph \Xo + \sum_{j=1}^n \inttj{\Ekh^{n-j+1}\Ph G}.
\end{align}
For the error analysis in Section~\ref{s:error_analysis}, we require the following lemma for $\Ekh$. Here, $\discd$ denotes the discrete time derivative, such that $\discd f^n = (f^n - f^{n-1})/k$ for any temporally discrete function $f^n$.

\begin{lemma}\label{lem:Ekh_estimate}
	Let $\Ekh= (I+k\Ah)^{-1}$, with $\Ah$ as defined in~\eqref{eq:Ah}. Then, for any function $v\in \dot{H}^s$, it holds for $n\geq \max\{1,\ell\}$
	\begin{align}
	|\discd^\ell \Ekh^nv|_q \leq C\tn^{(s-q)/2-\ell} |v|_s, \quad s,q \in \{0,1\}, \ \ell = 1,2.
	\label{eq:discrete_semigroup_bound}
	\end{align}
	For $\ell=0$, the inequality~\eqref{eq:discrete_semigroup_bound} holds for $0\leq s \leq q \leq 1$.
\end{lemma}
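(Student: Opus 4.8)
The plan is to exploit that $\Ekh = (I+k\Ah)^{-1} = r(k\Ah)$ with $r(x) = (1+x)^{-1}$ is a rational function of the self-adjoint, positive definite operator $\Ah$ on the finite-dimensional space $\Vh$, and is therefore diagonalized in an $\H$-orthonormal eigenbasis $\{\varphi_{h,i}\}$ of $\Ah$ with eigenvalues $\mu_i > 0$. Since $\Ekh$ maps $\Vh$ into itself, I would first reduce to the case $v\in\Vh$ (for a general $v\in\dot{H}^s$ one applies the estimate to $\Ph v$ and uses $\H$- resp.\ $\dot{H}^1$-stability of $\Ph$). The key structural fact I would record at the outset is that for $w\in\Vh$ and $q\in\{0,1\}$ the continuous and discrete energy norms coincide: $|w|_0 = \|w\|$ and $|w|_1^2 = a(w,w) = (\Ah w, w) = \|\Ah^{1/2}w\|^2$, so that $|w|_q = \|\Ah^{q/2}w\|$. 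This turns every norm in the statement into an $\H$-norm of a power of $\Ah$ applied to $w$.

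Next I would convert the discrete time derivatives into powers of $\Ah$. From $\Ekh - I = (I+k\Ah)^{-1}(I-(I+k\Ah)) = -k\Ah\Ekh$ one gets $\discd\Ekh^n = k^{-1}\Ekh^{n-1}(\Ekh - I) = -\Ah\Ekh^n$, and since $\Ah$ is independent of the time step, induction gives $\discd^\ell\Ekh^n = (-\Ah)^\ell\Ekh^n$. Combining this with the norm identity yields
\[
|\discd^\ell\Ekh^n v|_q = \|\Ah^{q/2+\ell}\Ekh^n v\|,
\]
and, writing $|v|_s = \|\Ah^{s/2}v\|$ and commuting the powers of $\Ah$ through $\Ekh^n$, the whole statement reduces to the single discrete smoothing estimate $\|\Ah^{\beta}\Ekh^n w\| \le C\,\tn^{-\beta}\|w\|$ with $\beta = (q-s)/2 + \ell \ge 0$.

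This estimate I would prove by diagonalizing: with $w = \sum_i \hat{w}_i\varphi_{h,i}$,
\[
\|\Ah^{\beta}\Ekh^n w\|^2 = \sum_i \mu_i^{2\beta}(1+k\mu_i)^{-2n}\hat{w}_i^2,
\]
so it suffices to establish the scalar bound $\mu^{2\beta}(1+k\mu)^{-2n} \le C\,(nk)^{-2\beta}$ for every $\mu>0$, i.e.\ $(n\,k\mu)^{2\beta} \le C\,(1+k\mu)^{2n}$. Setting $x = k\mu$ and $p = 2\beta = q+2\ell - s$, this is $(nx)^p \le C(1+x)^{2n}$, which I would verify by maximizing $x\mapsto (nx)^p(1+x)^{-2n}$: the interior critical point is $x^\ast = p/(2n-p)$, and the value there simplifies to $(p/2)^p\,(1-p/(2n))^{2n-p} \le (p/2)^p$, a bound uniform in $n$ and $k$ whenever $2n > p$. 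Since $p\le 5$ in all admissible cases, the remaining situations with $2n \le p$ correspond to finitely many small values of $n$ and can be checked by hand. The factor $(nk)^{-2\beta} = \tn^{-2\beta}$ then produces exactly the claimed exponent $\tn^{(s-q)/2-\ell}$.

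The main obstacle is precisely this uniform scalar inequality at the smallest admissible time levels, i.e.\ $n$ close to $\ell$: there $p = q+2\ell-s$ can reach or exceed $2n$, the maximizer $x^\ast$ escapes to $+\infty$, and the generic bound degenerates, so these borderline $(s,q,\ell,n)$ must be isolated and treated separately rather than through the generic maximization. Finally, for the remaining $\ell=0$ statement with \emph{fractional} $s$ in the range $0\le s\le q\le 1$, I would not argue directly but prove the estimate at the two integer endpoints $s=0$ (giving the factor $\tn^{-q/2}$) and $s=q$ (the plain stability bound $|\Ekh^n v|_q\le C|v|_q$), and then interpolate, using that the spaces $\dot{H}^s$ are the interpolation spaces between $\H$ and $\dot{H}^1$ and that $\Ekh^n$ is bounded between the endpoint spaces.
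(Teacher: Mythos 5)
Your overall strategy is essentially the paper's own: the paper also diagonalizes $\Ah$ in its $L_2$-orthonormal eigenbasis, observes that the discrete time derivative acts on each mode as multiplication by $-\lambda_j^h$ (the scalar form of your operator identity $\discd^\ell\Ekh^n=(-\Ah)^\ell\Ekh^n$), and reduces everything to a scalar inequality, which it proves by extracting the term $\binom{n}{2\ell-1}(k\lambda)^{2\ell-1}$ from the binomial expansion of $(1+k\lambda)^n$ rather than by your maximization of $x\mapsto (nx)^p(1+x)^{-2n}$. Your write-up is tidier on two counts: the unified exponent $\beta=(q-s)/2+\ell$ treats all $(s,q,\ell)$ at once, where the paper verifies only $q=0$, $s=1$ and asserts the remaining cases "follow similarly"; and your maximization is more robust than the paper's binomial step, which, having first discarded a factor $(1+k\lambda)^{-n}$, needs $n\ge 2\ell-1$ and therefore does not actually cover the paper's own claimed case $\ell=2$, $n=2$ as written. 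You are also right to flag that general $v\in\dot H^s$ requires $\dot H^1$-stability of $\Ph$; the paper silently elides this (its final equality $\sum_i\lambda_i^h(\hat v_i^h)^2=|v|_1^2$ is really an identity for $|\Ph v|_1^2$).

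The genuine gap is your deferral of the borderline cases $2n\le p$: for $2n<p$ they cannot be "checked by hand", because the stated inequality is false there, not merely hard. Concretely, for $(s,q,\ell)=(0,1,1)$ and $n=1$ the claim reads $|\discd\Ekh v|_1\le Ck^{-3/2}\|v\|$; taking $v$ to be the eigenfunction of $\Ah$ with largest eigenvalue $\lambda^h_{\max}$ and using $\discd\Ekh=-\Ah\Ekh$, this is equivalent to $(k\lambda^h_{\max})^{3/2}\le C\bigl(1+k\lambda^h_{\max}\bigr)$, which no constant independent of $h$ and $k$ can satisfy, since $\lambda^h_{\max}\sim h^{-2}\to\infty$ as $h\to0$. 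The same failure occurs for $(s,q,\ell,n)=(0,1,2,2)$, i.e.\ the failure set is exactly $(s,q)=(0,1)$, $n=\ell$; the boundary cases $2n=p$ are fine, since the supremum of your scalar function as $x\to\infty$ is $(p/2)^p$. So the lemma as stated is too strong, and a correct proof must restrict to $2n\ge q+2\ell-s$ (equivalently, exclude those cases) rather than promise to verify them. To be fair, this defect is inherited from the paper: its "follow similarly" hides precisely these cases, and nothing breaks downstream because in the proof of Lemma~\ref{lem:F_properties} the $(s,q)=(0,1)$, $\ell\ge1$ estimates are only invoked inside maxima over $2\le j\le n$, where $2j\ge4>p$. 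But your proof, like the paper's, cannot establish the lemma in the generality stated, and your plan should say that these cases are removed, not that they are checked.
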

\begin{proof}
	First of all, it holds that $\Ah$ is self-adjoint, positive definite and defined on the finite-dimensional space $\Vh$. Therefore, there exists a finite set of positive eigenvalues $\{\lambda_j^h\}_{j=1}^{\Nh}$ with corresponding eigenvectors $\{\varphi_j^h\}_{j=1}^{\Nh}$ satisfying
	\begin{align}
	a(\varphi_j^h, v) = \lambda_j^h(\varphi_j^h, v),
	\end{align}
	for all $v\in \Vh$, such that $\Vh = \text{span}(\{\varphi_j^h\}_{j=1}^{\Nh})$ (see, e.g.,~\cite[Chapter~6]{LarT03}). Consequently, we can write
	\begin{align}
	\Ekh^nv = \sum_{j=1}^{\Nh} r(k\lambda_j)^n \hat{v}^h_j\varphi_j^h,
	\label{eq:discrete_eigenfunction_expansion}
	\end{align}
	where we have denoted $\hat{v}^h_j = (v, \varphi_j^h)$ and where $r(z) = 1/(1+z)$. Let $\sigma(\Lambda_h)$ denote the spectrum of the operator $\Lambda_h$. Note that, for $\ell= 1,2$ and $\lambda\in\sigma(\Lambda_h)$, we have
	\begin{align}
		\begin{split}
			\lambda^{2\ell-1}r(k\lambda)^{2n} &\leq \frac{\lambda^{2\ell-1}}{(1+k\lambda)^n} \leq \frac{\lambda^{2\ell-1}}{1 + \binom{n}{1}k\lambda + \cdots + \binom{n}{2\ell-1}(k\lambda)^{2\ell-1} + \cdots (k\lambda)^n} \\
			&\leq \frac{\lambda^{2\ell-1}}{\binom{n}{2\ell-1}(k\lambda)^{2\ell-1}} \leq \frac{1}{\frac{n^{2\ell-1}}{(2\ell-1)^{2\ell-1}}k^{2\ell-1}}  = \frac{(2\ell-1)^{2\ell-1}}{\tn^{2\ell-1}} = C\tn^{1-2\ell},
		\end{split}\label{eq:lambda_bound}
	\end{align}
	where the constant behaves nicely for small $\ell$. We show~\eqref{eq:discrete_semigroup_bound} in the case $q=0$, $s=1$, $\ell=1,2$. The remaining cases follow similarly. First, we note that that
	\begin{align}
	\discd r(k\lambda_i)^n
	&= \frac{1}{k}\big( r(k\lambda_i)^n - r(k\lambda_i)^{n-1} \big)= \frac{1}{k}\bigg( \frac{1}{(1 + k\lambda_i)^n} - \frac{1}{(1+k\lambda_i)^{n-1}} \bigg) =  -\lambda_i r(k\lambda_i)^n.
	\end{align}
	Using this, in combination with the expansion~\eqref{eq:discrete_eigenfunction_expansion}, we obtain
	\begin{align}
	\|\discd^\ell \Ekh^n v\|^2 
	&= (\discd^\ell \Ekh^n v, \discd^\ell \Ekh^n v) \\
	&= -a(\discd^{\ell-1} \Ekh^n v, \discd^\ell \Ekh^n v) \\
	&= -a\Big(\discd^{\ell-1}\sum_{i=1}^{\Nh} r(k\lambda_i^h)^n \hat{v}^h_i\varphi_i^h, \discd^\ell \sum_{j=1}^{\Nh} r(k\lambda_j^h)^n \hat{v}^h_j\varphi_j^h \Big) \\
	&= \sum_{i=1}^{\Nh} \sum_{j=1}^{\Nh} (\lambda_i^h)^{\ell-1} (\lambda_j^h)^\ell r(k\lambda_i^h)^nr(k\lambda_j^h)^n \hat{v}^h_i \hat{v}^h_j a(\varphi_i^h, \varphi_j^h) \\
	&= \sum_{i=1}^{\Nh} (\lambda_i^h)^{2\ell-1}r(k\lambda_i^h)^{2n}(\hat{v}^h_i)^2\lambda_i^h \\
	&\leq C\tn^{-(2\ell-1)}\sum_{i=1}^{\Nh} (\hat{v}^h_i)^2 \lambda_i^h = C\tn^{1-2\ell}|v|^2_1,
	\end{align}
	where the last inequality follows from~\eqref{eq:lambda_bound}.
\end{proof}

If sufficient regularity is assumed on the initial value $\Xo$ and the covariance operator $Q$, it holds that the finite element approximation $\Xhn$ from~\eqref{eq:fully_discrete_galerkin} converges to the exact solution of~\eqref{eq:spde} quadratically in space and linearly in time. In fact, by following the proof of~\cite[Theorem~3.14]{Kru14} and adjusting the calculations to our model problem, we have the following theorem. 

\begin{theorem}\label{thm:strong_convergence_h}
	Let $\Xhn$ and $X(\tn)$ be solutions to~\eqref{eq:fully_discrete_galerkin} and~\eqref{eq:spde}, respectively. Assume that $\|\Lambda^{(\ord-1)/2}G\|_{L_2^0} < +\infty$ and $\Xo\in L_2(\Omega; \dot{H}^\ord)$, for some $\ord\in [1,2]$. Then, the error satisfies 
	\begin{align}
	\|\Xhn - X(\tn)\|_{L_2(\Omega;L_2)} \leq C(k^{\ord/2} + h^\ord) \big( \|\Xo\|_{L_2(\Omega; \dot{H}^\ord)} + \|\Lambda^{(\ord-1)/2}G\|_{L_2^0} \big).\label{eq:fe_strong_error}
	\end{align}
\end{theorem}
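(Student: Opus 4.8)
The plan is to subtract the two Duhamel representations and estimate the resulting error terms separately. The exact solution is given by the mild formula~\eqref{eq:mild_solution}, while iterating~\eqref{eq:fully_discrete_galerkin} gives the discrete Duhamel principle stated just below it. Subtracting them,
\[
\Xhn - \X(\tn) = \big(\Ekh^n\Ph - E(\tn)\big)\Xo + \sum_{j=1}^n \inttj{\big(\Ekh^{n-j+1}\Ph - E(\tn-s)\big)G}.
\]
Since $\Xo$ is $\calF_0$-measurable while the stochastic integral has a deterministic (hence adapted) integrand and therefore vanishing conditional expectation given $\calF_0$, the cross term disappears and $\|\Xhn-\X(\tn)\|_{L_2(\Omega;L_2)}^2$ splits additively into a \emph{deterministic} (initial-value) contribution and a \emph{stochastic} (noise) contribution, which I would bound one at a time.

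For the deterministic part I would invoke the classical fully discrete backward-Euler finite element estimate $\|(\Ekh^n\Ph - E(\tn))v\| \le C(h^\ord + k^{\ord/2})|v|_\ord$. This follows by interpolating the solution operator $\Ekh^n\Ph - E(\tn)$ between $\dot H^2\to\dot H^0$, where the standard smooth-data bound gives norm $C(h^2+k)$ (via the Ritz projection and the rational approximation $r(z)=1/(1+z)$ of $e^{-z}$), and $\dot H^0\to\dot H^0$, where both operators are contractions; this yields $(h^2+k)^{\ord/2}\le C(h^\ord+k^{\ord/2})$ for $\ord\in[1,2]$. Taking the $L_2(\Omega)$-norm gives $C(h^\ord+k^{\ord/2})\|\Xo\|_{L_2(\Omega;\dot H^\ord)}$.

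The core is the stochastic part. By the It\^o isometry it equals $\int_0^{\tn}\|(\tilde E(\tn,s)-E(\tn-s))G\|_{L_2^0}^2\,\ds$, where $\tilde E(\tn,s)=\Ekh^{n-j+1}\Ph$ on $(\tjprev,\tj]$. Writing $m=n-j+1$ so that $\tn-s\in[t_{m-1},t_m)$, I would decompose
\[
\tilde E(\tn,s)-E(\tn-s) = \big(\Ekh^m\Ph - E(t_m)\big) + \big(E(t_m)-E(\tn-s)\big),
\]
a discretization error at the grid time $t_m$ plus a semigroup increment over a gap of length $\delta\le k$. Expanding the $L_2^0$-norm over an eigenbasis of $Q$ reduces everything to modes $g_i=GQ^{1/2}\phi_i$ with $\sum_i|g_i|_{\ord-1}^2=\|\Lambda^{(\ord-1)/2}G\|_{L_2^0}^2$. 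For the discretization term I would split further into a spatial part $E_h(t_m)P_h-E(t_m)$ (with $E_h(t)=e^{-t\Ah}$) and a temporal part $\Ekh^m\Ph-E_h(t_m)P_h$, and bound them by the \emph{mean-square} deterministic operator estimates $\int_0^{\tn}\|(E_h(\sigma)P_h-E(\sigma))g\|^2\,\mathrm d\sigma\le Ch^{2\ord}|g|_{\ord-1}^2$ and $k\sum_m\|(\Ekh^m-E_h(t_m)P_h)g\|^2\le Ck^\ord|g|_{\ord-1}^2$, the latter relying on the discrete smoothing bounds of Lemma~\ref{lem:Ekh_estimate}. For the increment term I would use $E(t_m)-E(\tn-s)=E(\tn-s)(E(\delta)-I)$ with $\|(E(\delta)-I)w\|\le C\delta^{\ord/2}|w|_\ord$ and then absorb the time integral via $\int_0^{\tn}\|\Lambda^{1/2}E(\tau)\Lambda^{(\ord-1)/2}g\|^2\,\mathrm d\tau\le\tfrac12|g|_{\ord-1}^2$ from Lemma~\ref{lem:semigroup_properties}(2), producing exactly the factor $k^\ord$. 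Summing over modes and combining with the deterministic part gives $C(h^{2\ord}+k^\ord)(\,\cdots)^2$, and taking square roots yields the claim.

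I expect the main obstacle to be the spatial discretization estimate. The naive route---bounding $\|(E_h(\sigma)P_h-E(\sigma))g\|$ pointwise by the smoothing estimate $Ch^2\sigma^{-(3-\ord)/2}|g|_{\ord-1}$ and integrating---fails, since $\int_0^{\tn}\sigma^{-(3-\ord)}\,\mathrm d\sigma$ diverges for $\ord<2$ and the spatial power is wrong. One must instead use the genuinely sharper time-integrated operator bound, which mode-by-mode distinguishes mesh-resolved eigenvalues ($h^2\lambda_i\le1$) from unresolved ones and thereby recovers simultaneously the correct rate $h^{2\ord}$ and the integrability; the analogous care is needed for the temporal sum. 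Tracking the Hilbert--Schmidt ($L_2^0$) norm and the $\Lambda^{(\ord-1)/2}$ smoothing correctly is then routine. In contrast to the later LOD analysis, the constants here may depend on the elliptic regularity of $A$ (cf.\ Remark~\ref{rem:elliptic_regularity}), so no special effort is required to keep them diffusion-independent.
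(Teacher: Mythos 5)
Your proposal is correct and takes essentially the same approach as the paper, whose proof consists of citing \cite[Theorem~3.14]{Kru14} and adjusting the calculations to this model problem: the error is split via the discrete Duhamel representation into an initial-data term, bounded by interpolating the deterministic backward-Euler estimates between $\dot{H}^2$ and $L_2$, and a noise term, bounded by the It\^o isometry together with time-integrated (mean-square) operator smoothing estimates of exactly the kind you describe. Your treatment of the semigroup increment also recovers the paper's remark following the theorem, namely that additive noise lifts the temporal order from $1/2$ to $\ord/2$ because the stochastic integral is discretized exactly.
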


Note that in~\cite[Theorem~3.14]{Kru14} the temporal convergence order is limited to $1/2$, which is a consequence of considering multiplicative noise. In the case of additive noise, the Euler--Maruyama scheme approximates the stochastic integral exactly, hence removing this limit.

The theorem states optimal order convergence for the finite element approximation of~\eqref{eq:spde}. However, the estimate heavily relies on elliptic regularity, and as stated in Remark~\ref{rem:elliptic_regularity}, this consequently leads to the constant $C$ being prohibitively high. To emphasize this, we assume the operator $\Lambda$ to oscillate at a scale of $\varepsilon$. Then, since the constant depends on the second order spatial derivative of $X$, it holds that $C \sim \varepsilon^{-1}$ (see~\cite[Chapter~2]{MalP20} for a detailed discussion). In practice, this means that the optimal order convergence in~\eqref{eq:fe_strong_error} is only valid once the mesh size is sufficiently small, i.e., when $h^\ord < \varepsilon$, which quickly becomes computationally challenging with decreasing $\varepsilon$.

The purpose of this paper is to present an alternative approach to approximating~\eqref{eq:spde} by using the so-called localized orthogonal decomposition (LOD) method. In the subsequent section, we demonstrate the approach of the method, and thereafter continue by deriving a similar optimal order estimate as~\eqref{eq:fe_strong_error}, where the constant is independent of any variations present in the operator $\Lambda$.

\section{Localized orthogonal decomposition}
\label{s:lod}

This section is dedicated to the development of a multiscale method specifically designed to approximate \eqref{eq:spde}. First of all, we introduce necessary notation for the discretization. In similarity to $\Vh$, we define $\VH$ but for coarser mesh sizes $H > h$. The mesh size $h$ is assumed to be fine enough to resolve the variations in the diffusion $A$, while $H$ is the mesh size of an under-resolved coarse scale. Moreover, the corresponding family of partitions $\{\KH\}_{H>h}$ is assumed to be both shape-regular and quasi-uniform. The set of interior nodes in $\VH$ is denoted by $\NH$, and by $\phi_i$ we denote the standard P1-FEM basis function for a node $x_i \in \NH$, such that $\VH = \text{span}(\{\phi_i\}_{i\in \NH})$. At last, we assume $\Kh$ to be a refinement of $\KH$, such that we have $\VH \subset \Vh$. In general, the assumption on nested meshes is not necessary, but done for simplicity. For more information on LOD with non-nested grids and its applications, we refer to~\cite{MalP15, EdeGHKM22}.

\subsection{Ideal method}

The goal of this section is to construct a generalized finite element space $\Vms$, which is of the same dimension as $\VH$, but with improved approximation properties. The construction is done by incorporating fine-scale information about the diffusion coefficient into $\VH$. For this purpose, we require an interpolant $\I : \Vh \rightarrow \VH$ that has the projection property $\I = \I \circ \I$, and moreover satisfies the interpolation estimate
\begin{align}
H^{-1} \|(\Id - \I)v\|_{L_2(K)} + \|\nabla \I v\|_{L_2(K)} \leq \hat{C}_\I \|\nabla v\|_{L_2(\N(K))},
\label{eq:local_interpolant_estimate}
\end{align}
for any $v\in\Vh$, where $\N(K)$ is the neighborhood of $K\in \KH$, i.e.,
\begin{equation}
	N(K) := \{\bar{S} \in \KH: \bar{S} \cap \bar{K} \neq \emptyset\}.
	\label{eq:patch}
\end{equation}
An illustration of $N(K)$ can be seen in Figure~\ref{fig:pathces}. For a shape-regular mesh, the estimate~\eqref{eq:local_interpolant_estimate} can be further summed into the global bound
\begin{align}
H^{-1}\|(\Id - \I)v\| + \|\nabla \I v\| \leq C_\I \|\nabla v\|, 
\end{align}
for any $v\in\Vh$, where $C_\I$ depends on the interpolation constant $\hat{C}_\I$ and the shape-regularity of the mesh.

For the method proposed in this paper, it is not essential to make an explicit characterization of $\I$. However, a common choice, which moreover will be used in the numerical examples, is to take $\I= \pi_H \circ \Pi_H$, where $\Pi_H$ is the piecewise $L_2(D)$-projection onto the space of piecewise affine functions $P_1(\KH)$, and $\pi_H$ is an averaging operator such that for $v\in P_1(\KH)$, we have
\begin{align}
	(\pi_Hv)(x) := \sum_{K\in\KH: x\in \bar{K}} (v\mathds{1}_K)(x) \cdot \frac{1}{\text{card}\{K' \in \KH: x\in \bar{K'}\}},
\end{align}
for any vertex $x$ of $\KH$. For more information on choices of the interpolant, we refer to~\cite{EngHMP19}, and for a proof of~\eqref{eq:local_interpolant_estimate} for our particular choice, see, e.g., \cite{Bre94, ErnG17, Osw93}.

Next, let the kernel of $\I$ define the fine-scale space $\Vf := \ker(\I)$, such that $\Vf$ recovers the features which $\I$ is unable to project onto the coarse-scale space $\VH$. Consequently, this leads to a decomposition of the solution space, namely
\begin{align}
\Vh = \VH \oplus \Vf.
\end{align}
The goal of the LOD method is to create a similar decomposition, where the coarse-scale space contains information of the diffusion on the fine scale. For this purpose, we define the Ritz-projection $\Rf: \Vh \rightarrow \Vf$, such that $\Rf v \in \Vf$ solves
\begin{align}
a(\Rf v, w) = a(v, w),
\label{eq:global_ritz_projection}
\end{align}
for all $w\in\Vf$. We may now define our multiscale space as $\Vms:= \VH - \Rf \VH$, with basis $\{\phi_i - \Rf \phi_i\}_{i\in \NH}$. Here, the basis correction $\Rf \phi_i$ contains information about the diffusion coefficient on the fine scale, and therefore contributes with such information in the space $\Vms$, while we still maintain the property that $\dim(\Vms) = \dim(\VH)$. Consequently, we manage to obtain better approximation properties, while the corresponding matrix system remains equally large. For an illustration of the corrected basis $\Rf\phi_i$, and the modified basis function $\phi_i - \Rf\phi_i$, see Figure~\ref{fig:newbasis}. Moreover, note that we have the decomposition 
\begin{align}
\Vh  = \Vms \oplus \Vf,
\end{align}
i.e., each function $v_h \in \Vh$ can be decomposed as $v_h = \vms + \vf$, where $\vms \in \Vms$ and $\vf\in \Vf$ are orthogonal with respect to the inner product $a(\cdot, \cdot)$.

\begin{figure}
	\centering
	\begin{subfigure}[b]{0.4\textwidth}
		\includegraphics[width=\textwidth]{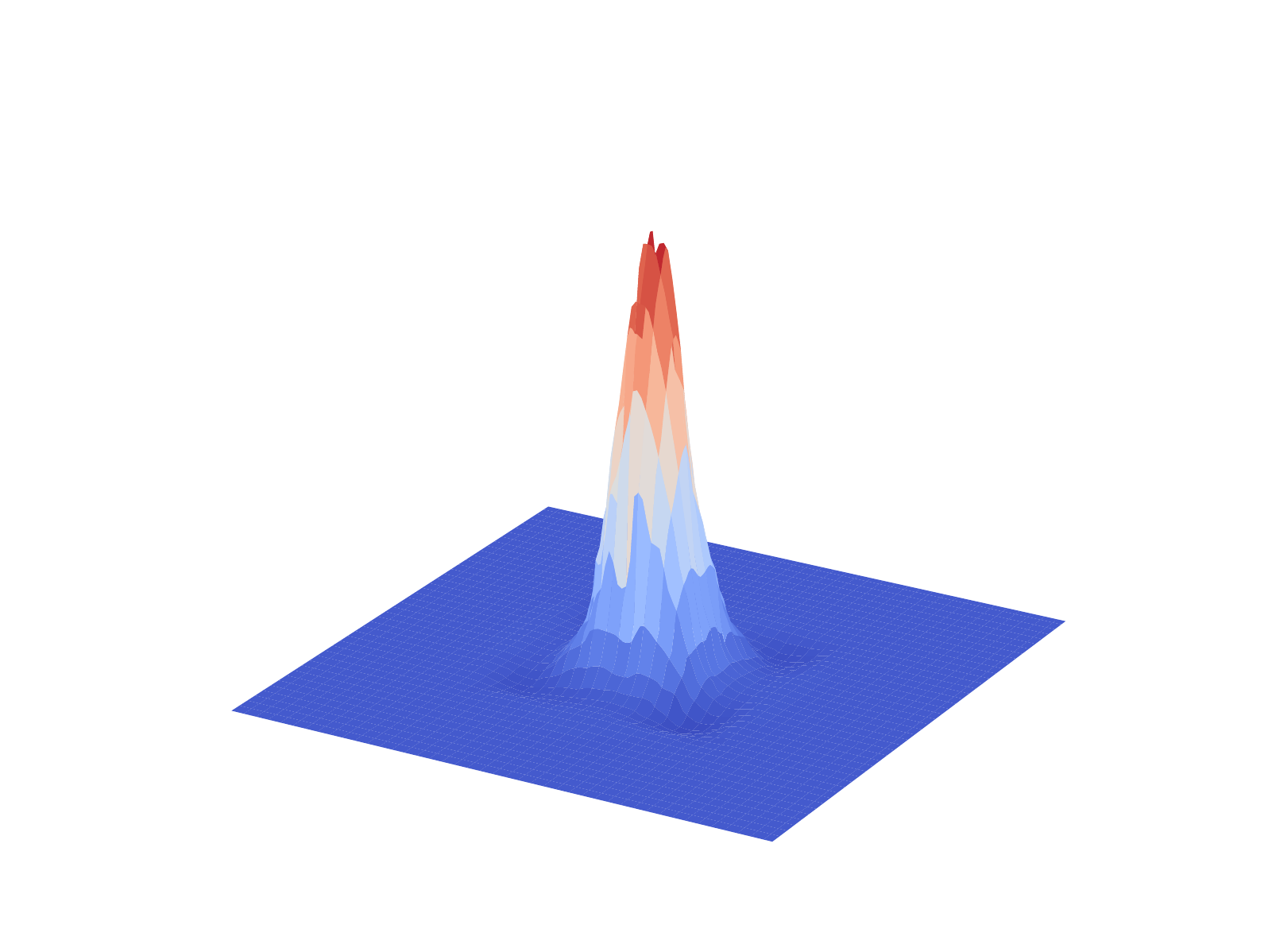}
		\caption{$\phi_i - \Rf\phi_i$.}
		\label{fig:msbasis}
	\end{subfigure}
	~ 
	\begin{subfigure}[b]{0.4\textwidth}
		\includegraphics[width=\textwidth]{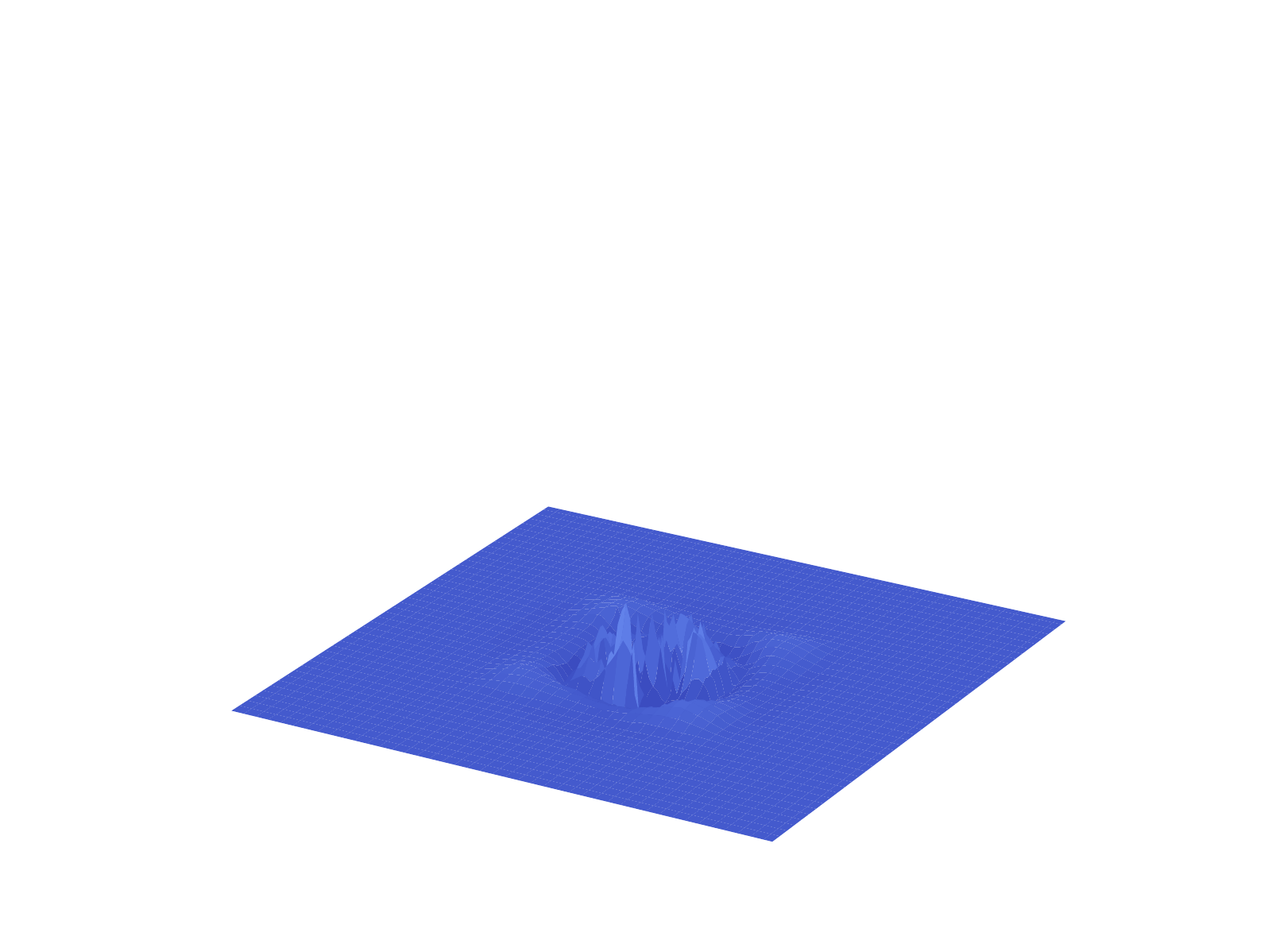}
		\caption{$\Rf\phi_i$.}
		\label{fig:corrbasis}
	\end{subfigure}
	\caption{The modified basis function $\phi_i - \Rf\phi_i$ and the Ritz-projected hat function $\Rf\phi_i$.}\label{fig:newbasis}
\end{figure}

At this point, one may construct a so-called ideal LOD method by simply replacing the standard finite element space $\VH$ by the multiscale space $\Vms$. However, the construction of $\Vms$ is based on the global projection~\eqref{eq:global_ritz_projection}, posed on the fine scale, which is neither feasible in terms of computational complexity nor in terms of memory. However, it is by now well-known (see~\cite{MalP14, HenP13}) that a basis correction $\Rf \phi_i$ decays exponentially fast away from its corresponding node. Therefore, it suffices to compute these corrections on local patches, consequently reducing the complexity without losing significant information. 

\subsection{Localization}

We begin by introducing the element-based grid patches to which the computations of $\{\Rf \phi_i\}_{i\in\NH}$ are to be restricted. With $N(K)$ as defined in~\eqref{eq:patch}, we define iteratively 
\begin{align}
	N^1(K) &:= N(K), \\
	N^\ell(K) &:= N(N^{\ell-1}(K)), \quad \ell \geq 2.
\end{align}
For an illustration of such patches for different choices of $\ell$, see Figure~\ref{fig:pathces}. Now, let $\VflK := \{w\in \Vf: \supp(w) \subseteq N^\ell(K)\}$, i.e., the space of fine-scale functions restricted to the local patch $N^\ell(K)$. Using this space, we can define a local Ritz-projection $\RflK: \Vh \rightarrow \VflK$ such that $\RflK v \in \VflK$ solves
\begin{align}
	a(\RflK v, w) = a(v, w),\label{eq:localized_ritz_projection_f}
\end{align}
for all $w\in \VflK$. We now sum the local contributions over all elements to get the corresponding global version as
\begin{align}
	\Rfl v = \sum_{K\in \KH} \RflK v.
\end{align}
The localized multiscale space can now be defined as $\Vmsl := \VH - \Rfl \VH$, spanned by the corrected basis functions $\{\phi_i - \Rfl \phi_i\}_{i\in \NH}$.

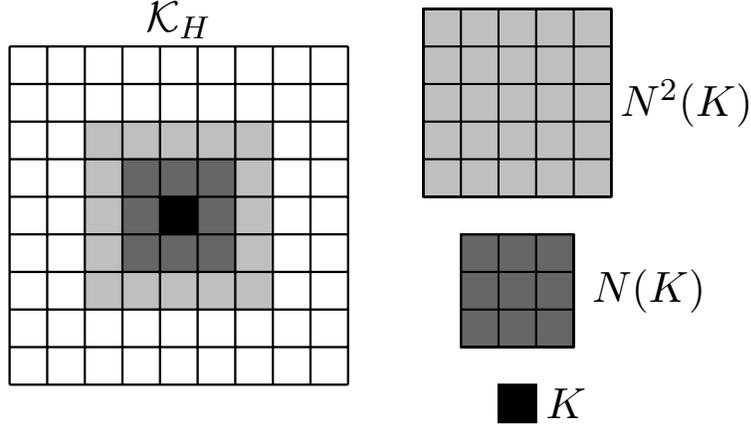
\begin{figure}
	\centering
	\begin{tikzpicture} [scale=0.5]
	
	\draw[fill=black!25!white] (2,2) rectangle (7,7);
	
	\draw[fill=black!60!white] (3,3) rectangle (6,6);
	
	\draw[fill=black!100!white] (4,4) rectangle (5,5);
	
	\draw [line width=0.3mm, draw=black, fill=black!20!white] (0,0) grid  (9,9);
	
	\draw[thick, fill=black!25!white] (11,5) grid (16,10) rectangle (11,5);
	
	\draw[thick, fill=black!60!white] (12,1) grid (15,4) rectangle (12,1);
	
	\draw[thick, fill=black!100!white] (13,-1) rectangle (14,0);
	
	\node[scale=1.5] at (18.0,7.5) {$N^2(K)$};
	
	\node[scale=1.5] at (17.0,2.5) {$N(K)$};
	
	\node[scale=1.5] at (14.8,-0.5) {$K$};
	
	\node[scale=1.5] at (4.5,9.7) {$\mathcal{K}_H$};
	
	\end{tikzpicture} 
	\caption{Illustration of coarse grid patches for different choices of the localization parameter $\ell$.}
	\label{fig:pathces}
\end{figure}

We want to express an equation similar to~\eqref{eq:fully_discrete_galerkin}, but defined on the space $\Vmsl$. For this, we introduce the $L_2(D)$-projection onto $\Vmsl$, $\Pmsl: \Vh \rightarrow \Vmsl$, such that for $v\in\Vh$, $\Pmsl v\in \Vmsl$ satisfies
\begin{align}
(\Pmsl v, w) =  (v,w),
\end{align}
for all $w\in\Vmsl$, and define the composed projection operator $\Pmslh : V\rightarrow \Vmsl$, by $\Pmslh := \Pmsl \circ \Ph$. One can show that such a projection is well-defined by expanding the functions in the basis $\{\phi_i - \Rfl \phi_i\}_{i\in \NH}$, which yields a symmetric, positive definite matrix system. We moreover define the localized diffusion operator $\Amsl : \Vmsl \rightarrow \Vmsl$ by the relation
\begin{align}
	(\Amsl v, w) = a(v,w), 
\end{align}
for all $v,w\in\Vmsl$. 

The proposed method now states: find $\Xnmsl \in \Vmsl$ such that
\begin{align}
	\Xnmsl - \Xnmslprev + k\Amsl \Xnmsl= \inttn{ \Pmslh G}
	\label{eq:proposed_method}
\end{align}
for $n=1,\ldots,N$, with initial value $\Xomsl= \Pmslh \Xo$. Similarly to the standard finite element Galerkin scheme, we can define $\Ekmsl{n} := ((I + k\Amsl)^{-1})^n$ and write the solution as
\begin{align}
	\Xnmsl = \Ekmsl{n} \Pmslh \Xo + \sum_{j=1}^n \inttj{\Ekmsl{n-j+1}\Pmslh G}.
\end{align}

\begin{remark}
	In~\cite{MalP20} an alternative formulation of LOD is proposed where $\Pmslh$ in the right-hand side of~\eqref{eq:proposed_method} is replaced by $\PH : V \rightarrow \VH$. This will simplify the computation of the right-hand side, at the cost of a more involved a priori error analysis.  
\end{remark}

\section{Error analysis}
\label{s:error_analysis}

We wish to derive a convergence result for the strong error between the reference solution on the fine scale $\Xhn$ from~\eqref{eq:fully_discrete_galerkin} and the multiscale approximation $\Xnmsl$ from~\eqref{eq:proposed_method}. We emphasize that such an error estimate implicitly yields strong convergence between $\Xnmsl$ and the exact solution $X(\tn)$ from~\eqref{eq:mild_solution}, since
\begin{align}
\|\Xnmsl - X(\tn)\|_{L_2(\Omega; L_2)} \leq \|\Xnmsl-\Xhn\|_{L_2(\Omega; L_2)} + \|\Xhn - X(\tn)\|_{L_2(\Omega; L_2)},
\end{align}
where the strong convergence rate for the second term is already known from Theorem~\ref{thm:strong_convergence_h}.

For the analysis, we make use of the Ritz-projection onto $\Vmsl$, defined as the operator $\Rmsl : \Vh \rightarrow \Vmsl$ satisfying
\begin{align}
a(\Rmsl v, w) = a(v, w), 
\label{eq:localized_ritz_projection}
\end{align}
for any $w\in\Vmsl$. The following well-known result for $\Rmsl$ will be used frequently. The proof is based on a standard Aubin--Nitsche duality argument, and can be found in, e.g.,~\cite{MalP18}. By $a\sim b$, it means that there exist constants $c$ and $C$ (independent of the mesh size and any variations in the diffusion) such that $cb\leq a\leq Cb$.

\begin{lemma}\label{lem:Rmsl_estimate}
	Let the localization parameter $\ell$ be chosen as $\ell\sim \log(1/H)$. Then, for any $v\in\Vh$, the localized Ritz-projection $\Rmsl$ in~\eqref{eq:localized_ritz_projection} satisfies
	\begin{align}
		\|v - \Rmsl v\| \leq CH|v|_1,
		\label{eq:localized_ritz_projection_bound}
	\end{align}
	where $C$ depends on the contrast in the diffusion coefficient $A$, but not on its oscillations. 
\end{lemma}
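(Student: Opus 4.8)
The plan is to obtain the $L_2$ estimate by an Aubin--Nitsche duality argument. Two structural facts do the work: the $a$-orthogonal splitting $\Vh = \Vms \oplus \Vf$ with $\Vf = \ker(\I)$, and the interpolation estimate in the form $\|(\Id - \I)w\| \le C_\I H \|\nabla w\|$. Their combination encodes the central mechanism of LOD: any fine-scale function $w \in \Vf = \ker(\I)$ satisfies $\|w\| = \|(\Id - \I)w\| \le C_\I H \|\nabla w\| \le C H |w|_1$, with $C$ depending only on $\alpha_-$ and $C_\I$ and not on the oscillations of $A$. The difficulty relative to the ideal method is that $v - \Rmsl v$ need not lie in $\ker(\I)$, so this mechanism cannot be applied to it directly; duality is used to transfer the gain of a factor $H$ onto the dual solution, whose defining data lives in $L_2$.

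First I would introduce the discrete dual solution $z \in \Vh$ solving $a(\phi, z) = (v - \Rmsl v, \phi)$ for all $\phi \in \Vh$, and test with $\phi = v - \Rmsl v$ to get $\|v - \Rmsl v\|^2 = a(v - \Rmsl v, z)$. The Galerkin orthogonality of the Ritz projection, namely $a(v - \Rmsl v, w) = 0$ for all $w \in \Vmsl$, then allows subtracting $\Rmsl z \in \Vmsl$, so that
\[
\|v - \Rmsl v\|^2 = a(v - \Rmsl v, z - \Rmsl z) \le |v - \Rmsl v|_1 \, |z - \Rmsl z|_1 .
\]
On the first factor I only need the crude, oscillation-independent bound $|v - \Rmsl v|_1 \le |v|_1$, valid because $\Rmsl$ is the $a$-orthogonal (best-approximation) projection onto a space containing $0$. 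Everything then reduces to proving $|z - \Rmsl z|_1 \le C H \|v - \Rmsl v\|$.

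To establish this, write $g := v - \Rmsl v$ and split the energy error into an ideal and a localization part. For the ideal part, $\Rf z = z - \Rms z \in \Vf$ solves $a(\Rf z, \cdot) = (g, \cdot)$ on $\Vf$ (by symmetry of $a$ and the definition of $\Rf$); testing against $\Rf z$ and again using $\Rf z \in \ker(\I)$ with the interpolation estimate gives $|\Rf z|_1^2 = (g, (\Id - \I)\Rf z) \le C H \|g\| \, |\Rf z|_1$, hence $|z - \Rms z|_1 \le C H \|g\|$. A companion stability bound $|z|_1 \le C \|g\|$ follows from the dual equation and Poincaré's inequality. For the localization part I would invoke the exponential decay of the element correctors $\RflK \phi_i$ away from their patch $N^\ell(K)$, the cornerstone of LOD theory, which yields $|\Rms z - \Rmsl z|_1 \le C \ell^{d/2} \theta^\ell |z|_1$ for some $\theta \in (0,1)$ depending on the contrast $\alpha_+/\alpha_-$ but not on the oscillations of $A$. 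Combining the two parts and inserting $|z|_1 \le C\|g\|$ gives $|z - \Rmsl z|_1 \le C(H + \ell^{d/2}\theta^\ell)\|g\|$, and the choice $\ell \sim \log(1/H)$ forces $\ell^{d/2}\theta^\ell \lesssim H$.

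Substituting back, $\|v - \Rmsl v\|^2 \le |v|_1 \cdot C H \|v - \Rmsl v\|$, and cancelling one factor of $\|v - \Rmsl v\|$ yields the claim. The main obstacle is the exponential-decay estimate for the localized correctors: proving it from scratch requires cutoff-function arguments and a careful summation of local energy contributions, so in practice I would cite the established result (e.g.\ \cite{MalP18, MalP20}) and merely verify that its constant and decay rate $\theta$ depend on $A$ only through the contrast. A secondary point is the polylogarithmic factor $\ell^{d/2}$, which is absorbed by taking the implicit constant in $\ell \sim \log(1/H)$ slightly larger.
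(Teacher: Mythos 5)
Your proposal is correct and takes essentially the same route as the paper: the paper gives no in-text proof but states that the result follows from a standard Aubin--Nitsche duality argument as in \cite{MalP18}, which is precisely what you execute (discrete dual problem, Galerkin orthogonality of $\Rmsl$, the fine-scale mechanism $\|w\|\leq C_\I H\|\nabla w\|$ on $\ker(\I)$, and the cited exponential-decay bound for the localized correctors with $\ell\sim\log(1/H)$). One minor cosmetic point: the intermediate inequality $|\Rms z - \Rmsl z|_1 \leq C\ell^{d/2}\theta^\ell|z|_1$ is not usually stated in exactly that form in the literature --- the standard packaging compares $\Rmsl z$ to the intermediate function $v_H-\Rfl v_H\in\Vmsl$ with $\Rms z = v_H - \Rf v_H$ and uses best approximation, which yields an extra copy of the ideal error term --- but your combined estimate $|z-\Rmsl z|_1\leq C(H+\ell^{d/2}\theta^\ell)\|v-\Rmsl v\|$ is the correct and standard conclusion, so this does not affect the argument.
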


We split the error analysis into two parts. First, we discuss results related to deterministic parabolic problems. In a second part, we apply these results to the stochastic setting and prove strong convergence.

\subsection{Deterministic problems}

Let $\Un \in \Vh$ and $\Unmsl \in \Vmsl$ denote the solutions to~\eqref{eq:original_BE} and~\eqref{eq:proposed_method}, respectively, with zero right-hand side. That is,
\begin{align}
	\discd \Un + \Ah \Un&= 0, \\
	\discd \Unmsl + \Amsl \Unmsl &= 0,
\end{align}
where $\discd \Un = (\Un - \Unprev)/k$ denotes the backward discrete derivative. That is, the solutions can be written using the corresponding semigroups as $\Un= \Ekh^n \Ph \Xo$ and $\Unmsl = \Ekmsl{n}\Pmslh \Xo$. Moreover, define the operators $\Th := \Ah^{-1}\Ph$ and $\Tmsl := (\Amsl)^{-1}\Pmslh$, such that for $w_h\in\Vh$ and $w^{\mathrm{ms}}_\ell\in\Vmsl$, it holds that
\begin{align}
	a(\Th u, w_h) &= (u, w_h),\label{eq:Th}\\
	a(\Tmsl u, w^{\mathrm{ms}}_\ell) &= (u, w^{\mathrm{ms}}_\ell).\label{eq:Tmsl}
\end{align}
Then, $T_h$ and $\Tmsl$ can be viewed as solution operators to the following homogeneous equations in strong form
\begin{align}
T_h\discd \Un + \Un &= 0,\label{eq:Th_equation} \\
\Tmsl \discd \Unmsl + \Unmsl &= 0\label{eq:Tmsl_equation}.
\end{align}
Moreover, note that $\Tmsl = \Rmsl T_h$, since for $z \in \Vmsl \subset V_h$, we have
\begin{align}
a(\Tmsl u, z) &= a((\Amsl)^{-1}\Pmslh u, z) = (\Pmsl \Ph u, z) = (\Ph u,z) \\
&= a(\Ah^{-1}\Ph u, z) = a(\Rmsl \Ah^{-1}P_h u, z) = a(\Rmsl T_hu, z)
\end{align}
from which we can pass the term to the left-hand side and choose the test function $z = \Tmsl u - \Rmsl T_h u \in \Vmsl$ to prove the claim. At last, we require the following lemma for the error between the operators $\Th$ and $\Tmsl$.

\begin{lemma}\label{lem:T_estimate}
	Let $\Th$ be defined as in~\eqref{eq:Th} and $\Tmsl$ as in~\eqref{eq:Tmsl} with localization parameter chosen as $\ell\sim\log(1/H)$. Then, for $f\in L_2(D)$, it holds that
	\begin{align}
		\|(\Th - \Tmsl)f\| \leq CH^2\|f\|.
	\end{align}
\end{lemma}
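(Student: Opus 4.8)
The plan is to exploit the identity $\Tmsl = \Rmsl \Th$ established just above the statement, which reduces the claim to an $L_2$ bound on the single projection error $w := (\Id - \Rmsl)\Th f = \Th f - \Tmsl f$. The crucial feature is that the $H^2$-type constant coming from elliptic regularity—which would blow up with the oscillations of $A$ (cf.\ Remark~\ref{rem:elliptic_regularity})—never enters: both powers of $H$ are extracted from the \emph{oscillation-independent} estimate of Lemma~\ref{lem:Rmsl_estimate}.

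First I would record two structural facts about $\Rmsl$. Since $\Rmsl$ is the $a(\cdot,\cdot)$-orthogonal projection onto $\Vmsl$, the error $w$ is $a$-orthogonal to $\Vmsl$, i.e.\ $a(w,z)=0$ for all $z\in\Vmsl$; and because $\Rmsl$ is idempotent, $\Rmsl w = \Rmsl(\Id-\Rmsl)\Th f = 0$, so that $w=(\Id-\Rmsl)w$. This last identity is what lets me feed $w$ itself back into Lemma~\ref{lem:Rmsl_estimate}. The core step is then a first-order \emph{energy} estimate for $w$: testing against $w\in\Vh$, discarding $a(\Rmsl\Th f, w)=0$ by $a$-orthogonality, and using the defining relation $a(\Th f,v)=(f,v)$ for $v\in\Vh$, I obtain
\begin{align*}
	|w|_1^2 = a(w,w) = a(\Th f, w) = (f,w) \le \|f\|\,\|w\|.
\end{align*}
Applying Lemma~\ref{lem:Rmsl_estimate} to $w$ and using $\Rmsl w=0$ gives $\|w\| = \|(\Id-\Rmsl)w\| \le CH|w|_1$, whence $|w|_1^2 \le CH\|f\|\,|w|_1$, that is $|w|_1 \le CH\|f\|$.

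A single further application of Lemma~\ref{lem:Rmsl_estimate} then converts this energy bound into the desired $L_2$ bound:
\begin{align*}
	\|(\Th-\Tmsl)f\| = \|w\| = \|(\Id-\Rmsl)w\| \le CH|w|_1 \le CH^2\|f\|.
\end{align*}

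The only genuine obstacle is conceptual rather than computational: a naive application of Lemma~\ref{lem:Rmsl_estimate} directly to $\Th f$ yields merely $\|w\| \le CH|\Th f|_1 \le CH\|f\|$, i.e.\ first order. The gain to second order hinges on using the $L_2$ estimate \emph{twice}—once openly in the final line and once buried inside the energy estimate—which is legitimate precisely because the projection property $\Rmsl^2=\Rmsl$ makes $w$ a fixed point of $\Id-\Rmsl$. An equivalent route is the standard Aubin--Nitsche duality argument (as in \cite{MalP18}), pairing $(w,g)$ with the discrete dual solution $\Th g$ and invoking Galerkin orthogonality twice; I prefer the direct argument above, since it is self-contained and makes transparent that no $H^2$ elliptic regularity is invoked, so the constant inherits only the contrast-dependence already present in Lemma~\ref{lem:Rmsl_estimate}.
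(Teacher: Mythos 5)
Your proof is correct, but it follows a genuinely different route from the paper's. The paper's proof is a reduction plus citation: it observes that $\Th f$ and $\Tmsl f$ are exactly the fine-scale FEM solution and the localized LOD solution of the elliptic problem $a(z,w)=(f,w)$, and then invokes the elliptic $L_2$ error estimate of \cite[Theorem~5.5]{MalP20}, which already contains the $H^2$ rate under the choice $\ell\sim\log(1/H)$. You instead give a self-contained bootstrap built on the paper's own Lemma~\ref{lem:Rmsl_estimate}: since $\Rmsl$ is the exact $a$-orthogonal projection onto $\Vmsl$ (the localization enters only through the definition of the space $\Vmsl$, not through the projection), it is idempotent and satisfies Galerkin orthogonality, so the error $w=(\Id-\Rmsl)\Th f$ obeys $\Rmsl w=0$; Lemma~\ref{lem:Rmsl_estimate} then applies to $w$ itself, and combining $|w|_1^2=a(w,w)=(f,w)\le\|f\|\,\|w\|$ with $\|w\|\le CH|w|_1$, used twice, gives $\|w\|\le CH^2\|f\|$. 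All of these steps are valid, including your use of the identity $\Tmsl=\Rmsl\Th$, which the paper establishes immediately before the lemma. As for what each approach buys: the paper's version is shorter and delegates the localization analysis (exponential decay of the correctors and the choice of $\ell$) to the cited theorem; yours makes explicit, inside the paper, that the second power of $H$ arises from two applications of a single oscillation-independent first-order estimate, with no elliptic regularity anywhere. Be aware, though, that you have not eliminated the hard localization work, only relocated it: it sits inside Lemma~\ref{lem:Rmsl_estimate}, whose proof is itself cited to \cite{MalP18}. In effect, your argument re-proves the relevant case of \cite[Theorem~5.5]{MalP20} from ingredients already available in the paper, which is a legitimate and arguably more transparent alternative.
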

\begin{proof}
	Consider the elliptic finite element problem to find $z_h \in \Vh$ such that
	\begin{align}
		a(z_h, w) = (f, w),\label{eq:elliptic_fe}
	\end{align}
	for all $w\in \Vh$. It holds that the solution is given by $z_h =\Th f$, since for $w\in \Vh$, we have by the definition of $\Th$ in~\eqref{eq:Th} that
	\begin{align}
		a(z_h, w) = a(\Th f, w) = (f, w).
	\end{align}
	Likewise, it holds that $z^{\mathrm{ms}}_\ell = \Tmsl f$ solves the elliptic LOD problem, i.e., the system~\eqref{eq:elliptic_fe} posed on the space $\Vmsl$. Therefore, by~\cite[Theorem~5.5]{MalP20}, it follows that
	\begin{align}
		\|(\Th - \Tmsl)f\| =  \|z_h - z^{\mathrm{ms}}_\ell\| \leq CH^2\|f\|,
	\end{align}
	which concludes the proof.
\end{proof}

 We are now ready to prove the following properties, which are crucial for the proof of strong convergence later on.

\begin{lemma}\label{lem:F_properties}
	Let $\Fmsl{n} := \Ekmsl{n}\Pmslh - \Ekh^n \Ph$. Then, for $\ord\in[1,2]$, the properties
	\begin{align}
	\Big( k \sum_{j=1}^n \|\Fmsl{j}v\|^2 \Big)^{1/2} &\leq CH^\ord|v|_{\ord-1},\label{eq:semigroup_property_1} \\
	\|\Fmsl{n}v\| &\leq CH^\ord\tn^{-1/2}|v|_{\ord-1}, \label{eq:semigroup_property_2}
	\end{align}
	hold, where $C$ depends on the contrast in the diffusion $A$, but not on its variations.
\end{lemma}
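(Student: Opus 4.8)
The plan is to adapt Thomée's non-smooth-data backward Euler analysis, with the LOD elliptic estimate of Lemma~\ref{lem:T_estimate} playing the role that elliptic regularity plays in the classical case. Write $\psi^n := \Ekh^n \Ph v$ and $\phi^n := \Ekmsl{n}\Pmslh v$, so that $\Fmsl{n}v = \phi^n - \psi^n$ is exactly the difference of the two homogeneous discrete evolutions with their respective initial projections. Using the Ritz projection I split $\Fmsl{n}v = \theta^n + \rho^n$, where $\rho^n := (\Rmsl - I)\psi^n$ and $\theta^n := \phi^n - \Rmsl\psi^n \in \Vmsl$. From $\discd\psi^n + \Ah\psi^n = 0$ one gets $\psi^n = -\Th\discd\psi^n$, and combining this with the identity $\Tmsl = \Rmsl\Th$ established above yields $\rho^n = (\Th - \Tmsl)\discd\psi^n$, so that Lemma~\ref{lem:T_estimate} gives $\|\rho^n\| \le CH^2\|\discd\psi^n\|$; for the rough endpoint I instead use $\|\rho^n\| \le CH|\psi^n|_1$ from Lemma~\ref{lem:Rmsl_estimate}. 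In either case $\|\discd^\ell\psi^n\|$ and $|\psi^n|_1$ are controlled by the smoothing estimate of Lemma~\ref{lem:Ekh_estimate} for $\Ah$, namely $\|\discd^\ell\psi^n\| \le C\tn^{(\ord-1)/2-\ell}|v|_{\ord-1}$, where I additionally use $\dot{H}^{\ord-1}$-stability of $\Ph$ for $\ord\in[1,2]$.

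For the $\Vmsl$-part I derive a discrete parabolic equation. A short computation using the definitions of $\Amsl$, $\Rmsl$ and $\Pmsl$ gives $\discd\theta^n + \Amsl\theta^n = -\discd\eta^n$ with $\theta^0 = -\eta^0$, where $\eta^n := (\Rmsl - \Pmsl)\psi^n$. The crucial point is that, although the raw initial mismatch $\theta^0$ is only first order in $H$, the driving quantity $\eta^n$ is second order: writing $\eta^n = (\psi^n - \Pmsl\psi^n) - (\psi^n - \Rmsl\psi^n)$ and using that $\Pmsl$ is the $L_2$-best approximation in $\Vmsl$, both terms are bounded by the Ritz error, so $\|\eta^n\| \le CH^2\|\discd\psi^n\|$ again by Lemma~\ref{lem:T_estimate}. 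Representing $\theta^n$ by the discrete Duhamel principle $\theta^n = \Ekmsl{n}\theta^0 - k\sum_{j=1}^n \Ekmsl{n-j+1}\discd\eta^j$ and performing a summation by parts, the boundary contribution at $j=0$ cancels $\Ekmsl{n}\theta^0$ precisely because $\theta^0 = -\eta^0$; what remains is, schematically, $\theta^n = -\eta^n + k\sum_{j=1}^n \discd\big(\Ekmsl{n-j+1}\big)\eta^j$, driven entirely by the second-order quantity $\eta^j$.

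It then remains to bound the convolution term. Here I invoke the smoothing estimates of Lemma~\ref{lem:Ekh_estimate} for $\Amsl$, which hold verbatim since $\Amsl$ is symmetric and positive definite on the finite-dimensional space $\Vmsl$ and the spectral proof carries over unchanged, giving $\|\discd\Ekmsl{m}w\| \le Ct_m^{-1}\|w\|$. The delicate feature is that this kernel $t_m^{-1}$ is \emph{not} summable, so a crude termwise bound leaves a spurious factor $\log(\tn/k)$. To remove it I exploit the temporal regularity of $\eta$, splitting $\eta^j = (\eta^j - \eta^n) + \eta^n$, integrating the constant part exactly through a telescoping identity for $k\sum_m\discd\Ekmsl{m}$, and controlling the increments via $k\|\discd\eta^j\| \le CH^2 k\|\discd^2\psi^j\|$ using the $\ell=2$ case of the smoothing estimate. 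Evaluating the resulting time sums produces exactly the weight $\tn^{-1/2}$ of~\eqref{eq:semigroup_property_2}. For the summed estimate~\eqref{eq:semigroup_property_1} the same decomposition applies, with the clean quadrature identity $k\sum_{j=1}^n\|\discd\psi^j\|^2 \le C|v|_1^2$ (proved spectrally as in Lemma~\ref{lem:semigroup_properties}(2)) together with a discrete maximal-regularity bound for the convolution covering the smooth endpoint $\ord=2$, and the bound $\|\rho^n\| \le CH|\psi^n|_1$ covering the rough endpoint $\ord=1$; the intermediate range follows by interpolation.

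The main obstacle is exactly this non-smooth-data regime near $t=0$: a naive splitting loses a full power of $H$ because the separate initial layer $\theta^0$ is only $O(H)$, and the remedy is the summation-by-parts cancellation $\theta^0 = -\eta^0$ that trades $\theta^0$ for the genuinely second-order projection difference $\eta^n = (\Rmsl - \Pmsl)\psi^n$. Once that cancellation is in place, the remaining work is purely the careful estimation of the convolution of the non-summable parabolic smoothing kernel against the data singularity, done uniformly in $H$, $k$ and $n$ and checked to converge for every $\ord\in[1,2]$.
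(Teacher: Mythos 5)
Your proposal is correct, but it takes a genuinely different route from the paper. The paper never splits the error: it derives the single identity $\Tmsl \discd \en + \en = (\Rmsl - I)\Un =: \rhon$ for $\en = \Fmsl{n}v$ (the same elliptic identity you use for your $\rho^n$, but kept as the forcing of an evolution equation in \emph{inverse-operator form}), and then gets \eqref{eq:semigroup_property_1} from a one-line energy argument --- testing with $\en$ yields $k\sum_{j}\|\ej\|^2 \le k\sum_j\|\rhoj\|^2$ directly, so the non-summable kernel you fight against never appears --- while \eqref{eq:semigroup_property_2} follows by testing with $\discd \en$ and importing the weighted-in-time estimates of \cite[Lemmas~4.2 \& 4.3]{MalP18}. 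Your route is instead Thom\'ee's classical nonsmooth-data argument transplanted to LOD: the Ritz splitting $\Fmsl{n}v = \theta^n + \rho^n$, the standard-form equation $\discd\theta^n + \Amsl\theta^n = -\discd\eta^n$ with $\eta^n = (\Rmsl - \Pmsl)\psi^n$ and $\theta^0 = -\eta^0$, discrete Duhamel with Abel summation, and spectral smoothing for $\Ekmsl{m}$ (which, as you say, carries over verbatim since $\Amsl$ is symmetric positive definite on $\Vmsl$). Both routes rest on the same inputs, Lemmas~\ref{lem:Rmsl_estimate}, \ref{lem:T_estimate} and \ref{lem:Ekh_estimate}; what yours buys is self-containedness (no reliance on the weighted lemmas of \cite{MalP18}), and what it costs is the kernel analysis that the paper's formulation sidesteps. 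Two points in your plan deserve emphasis when written out. First, the log-removal only succeeds because the increments satisfy the sharp bound $\|\eta^j - \eta^n\| \le CH^2\big(\tj^{-1/2} - \tn^{-1/2}\big)|v|_1$, which vanishes proportionally to $\tn - \tj$ near $j = n$ and thereby cancels the diagonal singularity of the kernel $k/t_{n-j}$; the cruder bound $CH^2\tj^{-1/2}$ alone would still leave a logarithm, so the sum must be split (e.g.\ at $j \approx n/2$) and both regimes estimated with the respective bound. Second, your ``discrete maximal-regularity'' step for the summed estimate is valid and elementary in this self-adjoint setting: writing the convolution as $k\sum_{j\le n}\Amsl\Ekmsl{n-j+1}\eta^j$, each spectral component has kernel $k\mu\, (1+k\mu)^{-m}$ with $\ell^1$-norm exactly one, so Young's inequality gives the $\ell^2(L_2)$ bound with constant one. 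A shared caveat: both your argument and the paper's silently use $\dot{H}^1$-stability of $\Ph$ (and $\Pmslh$) at the endpoint $\ord = 2$; you at least flag this, and it holds under the quasi-uniformity assumed in Section~\ref{s:lod}.
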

\begin{proof}
	Define the error $\en := \Fmsl{n}v = \Unmsl - \Un$, and note that it satisfies the equation
	\begin{align}
	\begin{split}
	\Tmsl\discd \en + \en &= \Tmsl \discd \Unmsl - \Tmsl \discd \Un + \Unmsl - \Un \\
	&= (\Th - \Tmsl)\discd \Un= (I-\Rmsl) \Th \discd \Un = (\Rmsl - I)\Un=: \rho^n.
	\end{split}\label{eq:first_rho_equation}
	\end{align}
	Here, we used~\eqref{eq:Th_equation}--\eqref{eq:Tmsl_equation} in the second equality, and in the third step the fact that $\Tmsl=\Rmsl\Th$. Now, we test this equation with $\en$ and apply Cauchy--Schwarz and Young's inequality. This yields
	\begin{align}
	(\Tmsl \discd \en, \en) + (\en, \en) = (\rho^n, \en) \leq \frac{1}{2}\|\rho^n\|^2 + \frac{1}{2}\|\en\|^2,
	\end{align}
	or, by passing the last term to the left-hand side, we get
	\begin{align}
		(\Tmsl\discd\en,\en) + \frac{1}{2}\|\en\|^2 \leq \frac{1}{2}\|\rhon\|^2.
		\label{eq:rho_equation}
	\end{align}
	Note that we can use the definition of $\Tmsl$ to create a lower bound for the first term, as
	\begin{align}
	(\Tmsl \discd \en, \en) = (\en, \Tmsl \discd \en) = a(\Tmsl \en, \Tmsl \discd \en) \geq \frac{1}{2}\discd |\Tmsl \en|^2_1,
	\end{align}
	where the final inequality follows since, for any temporally discrete function $f^n$, we have
	\begin{align}
	\begin{split}
	(f^n, \discd f^n) &= \tfrac{2}{k}\big( \|f^n\|^2 - (f^n, f^{n-1}) \big) \\
	&\geq \tfrac{2}{k} \big( \|f^n\|^2 - \big(\tfrac{1}{2}\|f^n\|^2 + \tfrac{1}{2}\|f^{n-1}\|^2 \big) \big) \\
	&= \tfrac{1}{k}\big(\|f^n\|^2 - \|f^{n-1}\|^2 \big) = \discd\|f^n\|^2.
	\end{split}\label{eq:disc_temp_function}
	\end{align}
	Therefore, by multiplying~\eqref{eq:rho_equation} with the time step $k$, and summing over $j=1,\ldots,n$, we get
	\begin{align}
	|\Tmsl \en|^2_1 + k \sum_{j=1}^n \|\ej\|^2 \leq k\sum_{j=1}^n \|\rho^j\|^2.
	\label{eq:tmp}
	\end{align}
	By interpolation theory, it suffices to show the properties~\eqref{eq:semigroup_property_1}--\eqref{eq:semigroup_property_2} for $\ord=1$ and $\ord=2$. 
	
	The case $\ord=2$ follows from~\eqref{eq:tmp}, since
	\begin{align}
	k\sum_{j=1}^n \|\ej\|^2 \leq k\sum_{j=1}^n \|\rho^j\|^2 \leq CH^4k\sum_{j=1}^n \|\discd \Uj\|^2 \leq CH^4|v|_1^2,
	\end{align}
	where the second inequality follows from Lemma~\ref{lem:T_estimate}. 
	
	For $\ord=1$, we make a similar estimate, but apply the Ritz-projection instead, i.e.,
	\begin{align}
		k\sum_{j=1}^n \|\rhoj\|^2 = k\sum_{j=1}^n \|(\Rmsl-I)\Un\|^2 \leq CH^2k\sum_{j=1}^n|\Uj|_1^2 \leq CH^2\|v\|,
	\end{align}	
	where the first inequality follows from Lemma~\ref{lem:Rmsl_estimate}.
	
 For~\eqref{eq:semigroup_property_2}, we test~\eqref{eq:first_rho_equation} with $\discd \en$ and follow the calculations in~\cite[Lemma~4.2 \& 4.3]{MalP18}, to show that
	\begin{align}\label{eq:en_estimate}
		\|\en\| \leq C\tn^{-1}\bigg( \max_{2\leq j\leq n} \tj^2\|\discd\rhoj\| + \max_{1\leq j\leq n} \Big( \tj\|\rhoj\| + \Big\| k\sum_{r=1}^{j} \rho_r\Big\| \Big) \bigg).
	\end{align}
	For the first term, we once again apply Lemma~\ref{lem:T_estimate} and note that
	\begin{align}
		\tj^2\|\discd\rhoj\| = \tj^2\|(\Th - \Tmsl)\discd^2\Uj\| \leq CH^2\tj^2\|\discd^2\Uj\| \leq CH^2\tj^{1/2}|v|_1,
	\end{align}
	where the last inequality follows from Lemma~\ref{lem:Ekh_estimate}. By applying similar calculations for the second term, we get
	\begin{align}
		\tj\|\rhoj\| = \tj\|(\Th - \Tmsl)\discd\Uj\| \leq CH^2\tj\|\discd\Uj\| \leq CH^2\tj^{1/2}|v|_1.
	\end{align}
	For the last term, we note that
	\begin{align}
		\max_{1\leq j\leq n} \Big\| k\sum_{r=1}^{j} \rho_r\Big\| \leq \max_{1\leq j\leq n} k\sum_{r=1}^{j} \|\rhoj\| = \max_{1\leq j\leq n} \tj\|\rhoj\| \leq \max_{1\leq j\leq n} CH^2\tj^{1/2}|v|_1.
	\end{align}
	In total, inequality~\eqref{eq:en_estimate} becomes
	\begin{align}
		\|\en\| \leq CH^2\tn^{-1/2}|v|_1,
	\end{align}
	which shows~\eqref{eq:semigroup_property_2} for $\ord=2$. 
	
	For $\ord=1$, it follows from~\cite[Lemma~4.2]{MalP18} that
	\begin{align}
		\|\en\| \leq C\Big( \max_{2\leq j\leq n} \tj \|\discd \rhoj\| + \max_{1\leq j\leq n} \|\rhoj\| \Big)
	\end{align}
	for $n\geq 2$. Using the Ritz-projection, the two terms can be bounded by
	\begin{align}
	\|\rhoj\| &= \|\Rmsl \Uj - \Uj\| \leq CH |\Uj|_1 \leq CH\tj^{-1/2}\|v\|,\\
	\tj\|\discd \rhoj\| &= \tj \|\Rmsl \discd \Uj - \discd \Uj\| \leq CH\tj|\discd \Uj|_1 \leq CH\tj^{-1/2}\|v\|,
	\end{align}
	where in the last steps we applied Lemma~\ref{lem:Ekh_estimate}.
\end{proof}

\subsection{Strong convergence}

We are now ready to prove the strong convergence between our multiscale method and the reference solution. 

\begin{theorem}\label{thm:strong_convergence_lod}
	Let $\Xnmsl \in \Vmsl$ be the solution to the multiscale method defined in~\eqref{eq:proposed_method}, and let $\Xhn\in \Vh$ be the reference solution from~\eqref{eq:original_BE}. Let $\ord\in[1,2]$, and assume that $\|\Lambda^{(\ord-1)/2}G\|_{L_2^0} < +\infty$ and $\Xo \in L_2(\Omega; \dot{H}^{\ord-1})$. Then, the strong error between $\Xnmsl$ and $\Xhn$ satisfies
	\begin{align}
	\|\Xnmsl - \Xhn\|_{L_2(\Omega; L_2)} \leq CH^\ord\big(\tn^{-1/2} \|\Xo\|_{L_2(\Omega,\dot{H}^{\ord-1})} + \|\Lambda^{(\ord-1)/2}G\|_{L_2^0} \big),
	\end{align}
	where the constant $C$ is independent of the variations in the diffusion $A$.
\end{theorem}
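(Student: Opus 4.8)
The plan is to subtract the discrete Duhamel representations of the two solutions and reduce the whole error to the operator difference studied in Lemma~\ref{lem:F_properties}. Since
\[
\Xnmsl - \Xhn = \Fmsl{n}\Xo + \sum_{j=1}^n \inttj{\Fmsl{n-j+1}G},
\]
I would square the $L_2(\Omega;L_2)$-norm and note that, exactly as in the regularity computation earlier, the cross term between the deterministic-propagation contribution $\Fmsl{n}\Xo$ and the stochastic integral vanishes by the zero-mean property of the It\^o integral (with $\Xo$ taken $\calF_0$-measurable). This decouples the squared error into $\E{\|\Fmsl{n}\Xo\|^2}$ and $\E{\|\sum_{j=1}^n \inttj{\Fmsl{n-j+1}G}\|^2}$, to be bounded separately.

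For the initial-value term I would apply the pointwise-in-time estimate~\eqref{eq:semigroup_property_2} with $v=\Xo$ and take expectations, obtaining $\E{\|\Fmsl{n}\Xo\|^2} \leq CH^{2\ord}\tn^{-1}\|\Xo\|_{L_2(\Omega;\dot{H}^{\ord-1})}^2$. This produces the first summand of the claim.

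The noise term is the delicate part. I would view $\sum_{j=1}^n \inttj{\Fmsl{n-j+1}G}$ as a single It\^o integral against the integrand that equals $\Fmsl{n-j+1}G$ on $[\tjprev,\tj]$, and invoke the It\^o isometry to convert the mean square into $k\sum_{j=1}^n \|\Fmsl{n-j+1}G\|_{L_2^0}^2$. Expanding the $L_2^0$-norm over a $Q^{1/2}$-weighted orthonormal basis and setting $g_i := G Q^{1/2}\phi_i$, I would interchange the summations and re-index $m=n-j+1$ so that the inner sum becomes $k\sum_{m=1}^n \|\Fmsl{m}g_i\|^2$, which is precisely the square of the quantity controlled by~\eqref{eq:semigroup_property_1}, hence bounded by $CH^{2\ord}|g_i|_{\ord-1}^2$. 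Summing over $i$ and recognizing $\sum_i |g_i|_{\ord-1}^2 = \sum_i \|\Lambda^{(\ord-1)/2}GQ^{1/2}\phi_i\|^2 = \|\Lambda^{(\ord-1)/2}G\|_{L_2^0}^2$ yields the second summand. Adding the two contributions and using $\sqrt{a+b}\leq\sqrt a+\sqrt b$ completes the proof.

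The main obstacle is the noise term: one must justify the It\^o isometry for the piecewise-constant-in-time integrand, carefully propagate the fractional power $\Lambda^{(\ord-1)/2}$ through the $L_2^0$-norm onto $G$ so that the regularity hypothesis $\|\Lambda^{(\ord-1)/2}G\|_{L_2^0}<\infty$ appears, and, above all, re-index the temporal summation so that it matches the $\ell^2$-in-time form of~\eqref{eq:semigroup_property_1} rather than the pointwise bound~\eqref{eq:semigroup_property_2}. Everything else is routine semigroup algebra.
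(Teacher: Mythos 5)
Your proposal is correct and follows essentially the same route as the paper: the same splitting $\Xnmsl - \Xhn = \Fmsl{n}\Xo + \sum_{j=1}^n \inttj{\Fmsl{n-j+1}G}$, the pointwise bound~\eqref{eq:semigroup_property_2} for the initial-value part, and the It\^o isometry combined with the $\ell^2$-in-time bound~\eqref{eq:semigroup_property_1} (after expanding the $L_2^0$-norm over a $Q^{1/2}$-weighted basis and re-indexing) for the noise part. The only cosmetic difference is that you decouple the two contributions via orthogonality of the cross term, whereas the paper simply uses the triangle inequality.
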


\begin{proof}
	Define $\en:= \Xnmsl - \Xhn$ and $\Fmsl{n} := \Ekmsl{n}\Pmslh - \Ekh^n \Ph$. Then, we can write the error as
	\begin{align}
	\en= \Fmsl{n}\Xo + \sum_{j=1}^n \inttj{\Fmsl{n-j+1}G} =: e_1^n + e_2^n.
	\end{align}
	The first error term can be bounded using the property \eqref{eq:semigroup_property_2} from Lemma~\ref{lem:F_properties}, as
	\begin{align}
	\|e_1^n\| = \|\Fmsl{n}\Xo\| \leq CH^\ord\tn^{-1/2}|\Xo|_{\ord-1}.
	\end{align}
	For the second error, we utilize the Itô isometry, and get
	\begin{align}
	\|e_2^n\|_{L_2(\Omega; L_2)}^2 &= \EBig{\Big\| \sum_{j=1}^n \inttj{\Fmsl{n-j+1}G} \Big\|^2} \\
	&= \sum_{j=1}^n \EBig{\Big\| \inttj{\Fmsl{n-j+1}G} \Big\|^2} \\
	& = \sum_{j=1}^n \int_{\tjprev}^{\tj} \|\Fmsl{n-j+1}G\|^2_{L_2^0}\, \ds \\
	&= k\sum_{j=1}^n \|\Fmsl{n-j+1}G\|^2_{L_2^0} \\
	&= k\sum_{j=1}^n \sum_{\ell=1}^\infty \|\Fmsl{n-j+1}GQ^{1/2}\varphi_\ell\|^2 \\
	&\leq CH^{2\ord} \sum_{\ell=1}^\infty \|\Lambda^{(\ord-1)/2}GQ^{1/2}\varphi_\ell\|^2 = CH^{2\ord}\|\Lambda^{(\ord-1)/2}G\|_{L_2^0}^2,
	\end{align}
	where the first equality holds since the mixed terms have zero mean, and the last inequality follows from the property~\eqref{eq:semigroup_property_1} in Lemma~\ref{lem:F_properties}. This concludes the proof.
\end{proof}

\section{Weak error}\label{s:weak_error}

In the previous section, we considered the convergence of the error in mean square for the proposed method~\eqref{eq:proposed_method}. However, in many cases it is preferable to analyze the expectation of the solution, or more generally some function of the solution often called a \emph{quantity of interest}. For this purpose, let $g:L_2(D)\rightarrow\calB$ be a Lipschitz continuous function with values in a separable Hilbert space $\calB$. In this section, we focus on the error between $\E{g(\Xhn)}$ and its multiscale approximation $\E{g(\Xnmsl)}$, referred to as the \textit{weak error}, given by
\begin{align}
	\|\E{g(\Xhn)} - \E{g(\Xnmsl)}\|_\calB.
\end{align}

It is a well-known fact that the weak error is bounded from above by the strong error. Indeed, we immediately have
\begin{align}
\|\E{g(\Xhn)} - \E{g(\Xnmsl)}\|^2_\calB 
&\leq \E{\|g(\Xhn) - g(\Xnmsl)\|^2_\calB} \\
&\leq L_g^2\E{\|\Xhn - \Xnmsl\|^2} \\
&= L_g^2\|\Xhn - \Xnmsl\|_{L_2(\Omega;L_2)}^2,
\end{align}
where we first used Jensen's inequality, followed by the fact that $g$ is Lipschitz continuous with constant $L_g$. Consequently, we can apply the strong convergence rate from Theorem~\ref{thm:strong_convergence_lod}, and the weak error satisfies
\begin{align}
	\|\E{g(\Xhn)} - \E{g(\Xnmsl)}\|_\calB \leq CH^2\Big(\tn^{-1/2}\|\Xo\|_{L_2(\Omega; \dot{H}^1)} + \|\Lambda^{1/2}G\|_{L^0_2}\Big).\label{eq:weak_error_h}
\end{align}

\begin{remark}
	It is a well-established fact that the discretization error for SPDEs generally achieves twice the order of weak convergence in comparison to the strong order of convergence (see, e.g.,~\cite{AndKL16, KovLL10, KovLL12}). However, with sufficient regularity on the initial value $\Xo$ and the covariance operator $Q$, the strong error bound of an equation with additive noise yields quadratic convergence in space, which corresponds to the optimal rate for piecewise linear polynomials. In this paper, the emphasis lies on highlighting the computational efficiency of applying LOD to approximate the solution $\E{g(\Xhn)}$. Therefore, we limit the scope of the weak error analysis to the case of sufficient regularity, and leave the more thorough details of lower regularity assumptions to future studies.
\end{remark}

To measure the weak error, we are required to compute the expectation of $g(\Xnmsl)$. However, since the solution $\Xnmsl$ to~\eqref{eq:proposed_method} depends on the realization of the noise $W$, the behavior of $\E{g(\Xnmsl)}$ is not obvious from one single simulation. Instead, the expectation is approximated by a Monte-Carlo estimator, which averages the solution over a large number of samples. We consider two such approaches. First, we apply the standard Monte-Carlo estimation technique. Subsequently, we reduce the computational complexity by considering a multilevel Monte-Carlo estimator.

\subsection{Monte-Carlo estimation}

The expected value is approximated by the standard Monte-Carlo estimator as
\begin{align}
	\E{g(\Xnmsl)} \approx E_M[g(\Xnmsl)] := \frac{1}{M}\sum_{m=1}^M g(\Xnmslm),\label{eq:monte_carlo_estimator}
\end{align}
where $\{\Xnmslm\}_{m=1}^M$ are independent and identically distributed random variables with the same distribution as $\Xnmsl$.

\sloppy We continue by considering the error $e^n_M := \E{g(\Xhn)} - E_M[g(\Xnmsl)]$. First of all, note that the error can be decomposed as
\begin{align}
	&\|e^n_M\|_{L_2(\Omega;\calB)} \leq \|\E{g(\Xhn)} - \E{g(\Xnmsl)}\|_\calB + \|\E{g(\Xnmsl)} - E_M[g(\Xnmsl)]\|_{L_2(\Omega;\calB)},\label{eq:error_decomposition}
\end{align}
where the first term corresponds to the weak discretization error using the proposed LOD method and the second term represents the statistical error from the Monte-Carlo sampling. The discretization error is bounded by~\eqref{eq:weak_error_h}. For the statistical error, we have the following equality (see~\cite[Lemma~4.1]{BarLS13})
\begin{align}
	\|\E{g(\Xnmsl)} - E_M[g(\Xnmsl)]\|_{L_2(\Omega;\calB)} = \frac{\text{Var}[g(\Xnmsl)]^{1/2}}{\sqrt{M}}.
\end{align}
The error between the expectation $\E{g(\Xhn)}$ and the Monte-Carlo LOD approximation $E_M[g(\Xnmsl)]$ is thus bounded by
\begin{align}
	\| \E{g(\Xhn)} - E_M[g(\Xnmsl)]\|_{L_2(\Omega;\calB)} \leq C\Big(H^2  + \frac{1}{\sqrt{M}}\Big).\label{eq:MC_rate}
\end{align}
In order to maintain the quadratic convergence, it is thus required to choose a sample size proportional to the coarse mesh size as $M \sim H^{-4}$.

\subsection{Multilevel Monte-Carlo}
\label{s:multilevel}

Next, we consider the multilevel Monte-Carlo approach for the approximation of $\E{g(\Xnmsl)}$. The technique was introduced for stochastic differential equations in~\cite{Gil08} and first used in the context of SPDEs in~\cite{BarLS13}. It is based on writing the expectation as a telescopic sum over all spatial discretization levels. For this purpose, denote by $H_j$ the mesh size of the discretization $\mathcal{K}_{H_j}$, and let $\XnmslHj$ be the solution to~\eqref{eq:proposed_method} with $\Vmsl$ based on the discretization $\mathcal{K}_{H_j}$. Then, the expectation can be written as the telescopic sum
\begin{align}
\E{g(\XnmslHj)} = \E{g(\XomslHj)} + \sum_{j=1}^{J} \E{g(\XnmslHj) - g(\XnmslHjprev)}.
\end{align}
In this way, each term in the sum can be estimated by a Monte-Carlo estimator with different sample size, such that the multilevel Monte-Carlo estimator is given by
\begin{align}
E^J[g(\XnmslHJ)] &:= E_{\Mtildelow_0}[g(\XomslHj)] + \sum_{j=1}^{J} E_{\Mtildelow_j}[g(\XnmslHj) - g(\XnmslHjprev)] \label{eq:mlmc_estimator}\\
&= \frac{1}{\Mtilde_0}\sum_{m=1}^{\Mtildemid_0} g(\XomslHjm) + \sum_{j=1}^{J} \frac{1}{\Mtilde_j} \sum_{m=1}^{\Mtildemid_j} g(\XnmslHjm) - g(\XnmslHjprevm),
\end{align}
where $\Mtilde_j$ denotes the number of samples used on the discretization level $\mathcal{K}_{H_j}$ and $E_{\Mtildelow_j}[\cdot]$ is defined as in~\eqref{eq:monte_carlo_estimator}.

The idea behind the multilevel Monte-Carlo approach is to choose the number of samples $\{\Mtilde_j\}_{j=0}^J$ such that the majority of the samples are allocated on the coarser levels, while the weak convergence remains the same as for the standard Monte-Carlo estimator. By repeating the calculations in~\cite[Corollary~3.8]{BarL12}, it follows that the multilevel estimator satisfies
\begin{align}
\|\E{g(X^n_h)} - E^J[g(\XnmslHJ)]\|_{L_2(\Omega;\calB)} \leq C\Bigg( H^2_J + \frac{1}{\sqrt{\Mtilde_0}} + \sum_{j=1}^J \frac{H_j^2}{\sqrt{\Mtilde_j}}\Bigg).
\end{align}
We want to choose $\{\Mtilde_j\}_{j=0}^J$ such that each term is proportional to $H_J^2$. Therefore, we can choose $\Mtilde_0\ = \gamma H_J^{-4}$ where $\gamma$ is a proportionality constant, and $\Mtilde_j\ =\ \Mtilde_0H_j^4\cdot 2^{2\delta j}$ for some $\delta > 0$. Then, the first two terms satisfy $\mathcal{O}(H_J^2)$, and for the final sum we note that
\begin{align}
\sum_{j=1}^J \frac{H_j^2}{\sqrt{\Mtilde_j}} = \sum_{j=1}^J \frac{1}{\sqrt{\Mtilde_0\cdot 2^{2\delta j}}} = \frac{1}{\sqrt{\Mtilde_0}}\sum_{j=1}^J 2^{-\delta j} \leq \gamma^{-1/2}H_J^2 \frac{1}{2^\delta - 1},
\end{align}
where the final inequality follows by bounding the geometric sum by its limit as $J\rightarrow \infty$. Note that choosing a large $\delta$ gives a better constant in the error estimate, but makes the number of samples $\{\Mtilde_j\}_{j=0}^J$ decay slower, and vice-versa for a small $\delta$. That is, $\delta$ controls the trade-off between the constant in the error estimate and the computational complexity of the method.

\section{Numerical examples}
\label{s:numerical_examples}

In this section we illustrate the performance of the main method~\eqref{eq:proposed_method} in two parts. In Subsection~\ref{s:strong_convergence}, we verify the strong convergence derived in Theorem~\ref{thm:strong_convergence_lod} between the LOD method and the reference FEM solution computed on a fine grid. Next, in Subsection~\ref{s:weak_convergence}, we analyze the weak convergence of the LOD solution. The weak error is analyzed in the sense of the expectation of the solution, i.e., the function $g$ from Section~\ref{s:weak_error} is set to the identity mapping. This is done for simplicity, as the exact solution satisfies a deterministic PDE, and therefore we circumvent the computationally heavy Monte-Carlo estimation of a reference solution. The expectation is estimated by combining the LOD method with a Monte-Carlo estimator and a multilevel Monte-Carlo estimator, respectively. Finally, in Subsection~\ref{s:computational_complexity}, we close with a discussion on the computational complexity of the different methods.

As model problem, we consider the system~\eqref{eq:spde} with an additional source term $f$ added to the right-hand side, i.e.,
\begin{align}
\dX(t) + \Lambda X(t)\, \dt = f(t)\, \dt + G\, \dW(t).
\label{eq:weak_model_problem}
\end{align}
By adding this source function, the amplitude of the solution will not decay towards zero equally fast. In turn, this allows for a larger contrast in the diffusion, making the resulting solution more interesting. 
\begin{remark}
	In this paper, we have considered an equation of the form~\eqref{eq:weak_model_problem} with $f\equiv0$. However, since the problem is linear, one can split the solution as $X = X_1 + X_2$, where $X_1$ solves~\eqref{eq:weak_model_problem} with $f\equiv 0$, and $X_2$ is the solution with vanishing noise, i.e., the deterministic case where $W \equiv 0$. The error can then be split as
	\begin{align}
	\|X^n_{h} - X^{n}_{\mathrm{ms},\ell}\|_{L_2(\Omega; L_2)} \leq \|X^n_{h,1} - X^{n}_{\mathrm{ms},\ell,1}\|_{L_2(\Omega; L_2)} + \|X^n_{h,2} - X^{n}_{\mathrm{ms},\ell,2}\|_{L_2(\Omega; L_2)},
	\end{align}
	where the second term is strictly deterministic, and the analysis follows directly from the work in~\cite{MalP18}.
\end{remark}

The data is similar in all numerical examples. In space, we consider a unit square domain, i.e., $D = [0,1] \times [0,1]$. The fine discretization is done with mesh size $h=2^{-8}$, while we vary the coarse mesh size $H$ for the convergence analysis. The temporal domain~$[0,T]$ is discretized using a uniform time step $k=0.01$ and a final time $T=0.5$. The diffusion coefficient $A(x,y)$ is a piecewise constant function whose values vary on a scale of $\varepsilon = 2^{-6}$, such that $h<\varepsilon$ clearly resolves the variations. We remark that the coefficient is generated once and remains the same through every realization of the equation. As initial data, we set $\Xo(x,y) = \sin(\pi x)\sin(\pi y)$, and the source function is constant $f(x,y,t) = 5$. An illustration of the diffusion coefficient and the initial data can be seen in Figure~\ref{fig:data}.

\begin{figure}
	\centering
	\begin{subfigure}[b]{0.44\textwidth}
		\includegraphics[width=\textwidth]{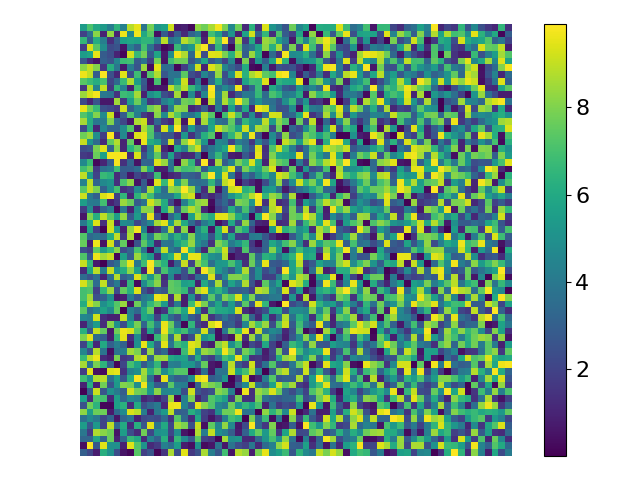}
		\caption{\small $A(x,y)$.}
	\end{subfigure}
	~ 
	\begin{subfigure}[b]{0.44\textwidth}
		\includegraphics[width=\textwidth]{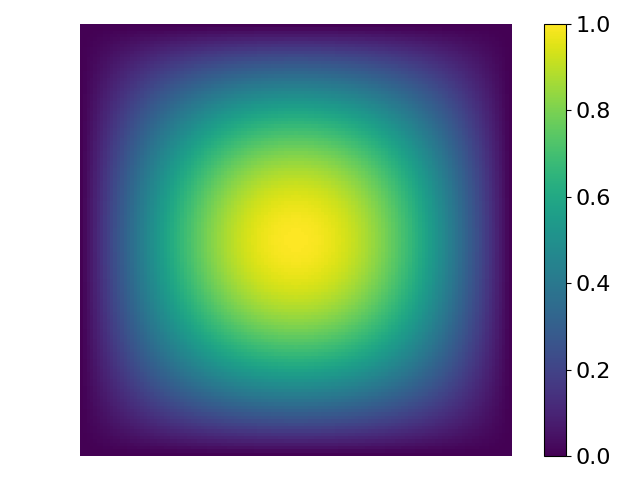}
		\caption{\small $\Xo(x,y)$.}
	\end{subfigure}
	\caption{\small Diffusion coefficient $A(x,y)$ and initial value $\Xo(x,y)$ used for the numerical examples.
	}\label{fig:data}
\end{figure}

For the implementation, it is necessary to construct a noise approximation of the $Q$-Wiener process $W$. First of all, we define the trace class operator $Q$ through the relation $Qe_{m,n} = \lambda_{m,n}e_{m,n}$, where we set the eigenfunctions to $e_{m,n} = \sin(m\pi x)\sin(n\pi y)$, with corresponding eigenvalues $\lambda_{m,n} = \Theta (m^{2+\theta}+n^{2+\theta})^{-1}$. Here, the parameters $\Theta$ and $\theta$ determine the amplitude and decay rate of the noise, respectively, and consequently affect the variance and smoothness of the corresponding solution. Moreover, recall the Karhunen--Loève expansion of $W$ in~\eqref{eq:Karhunen-Loeve}. For the numerical simulations, the common approach is to truncate such an expansion up to a truncation parameter $\kappa$. That is, the noise we consider for the examples is given by
\begin{align}
	W^\kappa(x,y,t) = \sum_{m=1}^{\kappa}\sum_{n=1}^{\kappa} \sqrt{\lambda_{m,n}}\beta_{m,n}(t)e_{m,n}(x,y),
\end{align}
where $\{\beta_{m,n}(t)\}_{m,n}$ are mutually independent, real-valued Brownian motions. The truncation parameter is set to $\kappa \sim h^{-1}$ for all examples. This particular choice is common for the noise approximation in SPDE simulations and will not dominate the error if the eigenvalues of $Q$ decay sufficiently fast (see, e.g.,~\cite{BarL12b, KovLL10}).

\subsection{Strong convergence}
\label{s:strong_convergence}

In this part, we illustrate numerically the strong convergence derived in Theorem~\ref{thm:strong_convergence_lod}. For the noise, we have set the eigenvalues to $Q$ as $\lambda_{m,n} = (m^{2+0.01}+n^{2+0.01})^{-1}$, and the truncation parameter to $\kappa=h^{-1}/16$. For an illustration of one realization of the noise $W^\kappa(t)$ and the corresponding reference solution $\X^n_{h}$ at final time $T=0.5$, see Figure~\ref{fig:sol_noise}. 

The LOD solution is computed for coarse spatial mesh sizes $H = 2^{-i}$, $i=1,\ldots, 6$, with localization parameter $\ell= \log_2(1/H)$. We compare the error in $L_2(\Omega; L_2)$-norm at final time $T=0.5$ against a reference solution computed with standard FEM on the fine grid. The norm is estimated by a Monte-Carlo estimator with 100 samples, i.e., $\|\cdot\|_{L_2(\Omega;L_2)} \approx E_{100}[\|\cdot\|]$. For comparison, the convergence rate when computing the coarse solution using standard FEM is also included, as well as an $\mathcal{O}(H^2)$-reference line which indicates the sought convergence rate from Theorem~\ref{thm:strong_convergence_lod}. In the convergence plot in Figure~\ref{fig:strong_error}, it is shown that the LOD solution achieves quadratic convergence, as predicted by Theorem~\ref{thm:strong_convergence_lod}, while the error for the FEM solution remains constant through all mesh sizes as it is unable to resolve the variations of $A$.

\begin{figure}
	\centering
	\begin{subfigure}[b]{0.44\textwidth}
		\includegraphics[width=\textwidth]{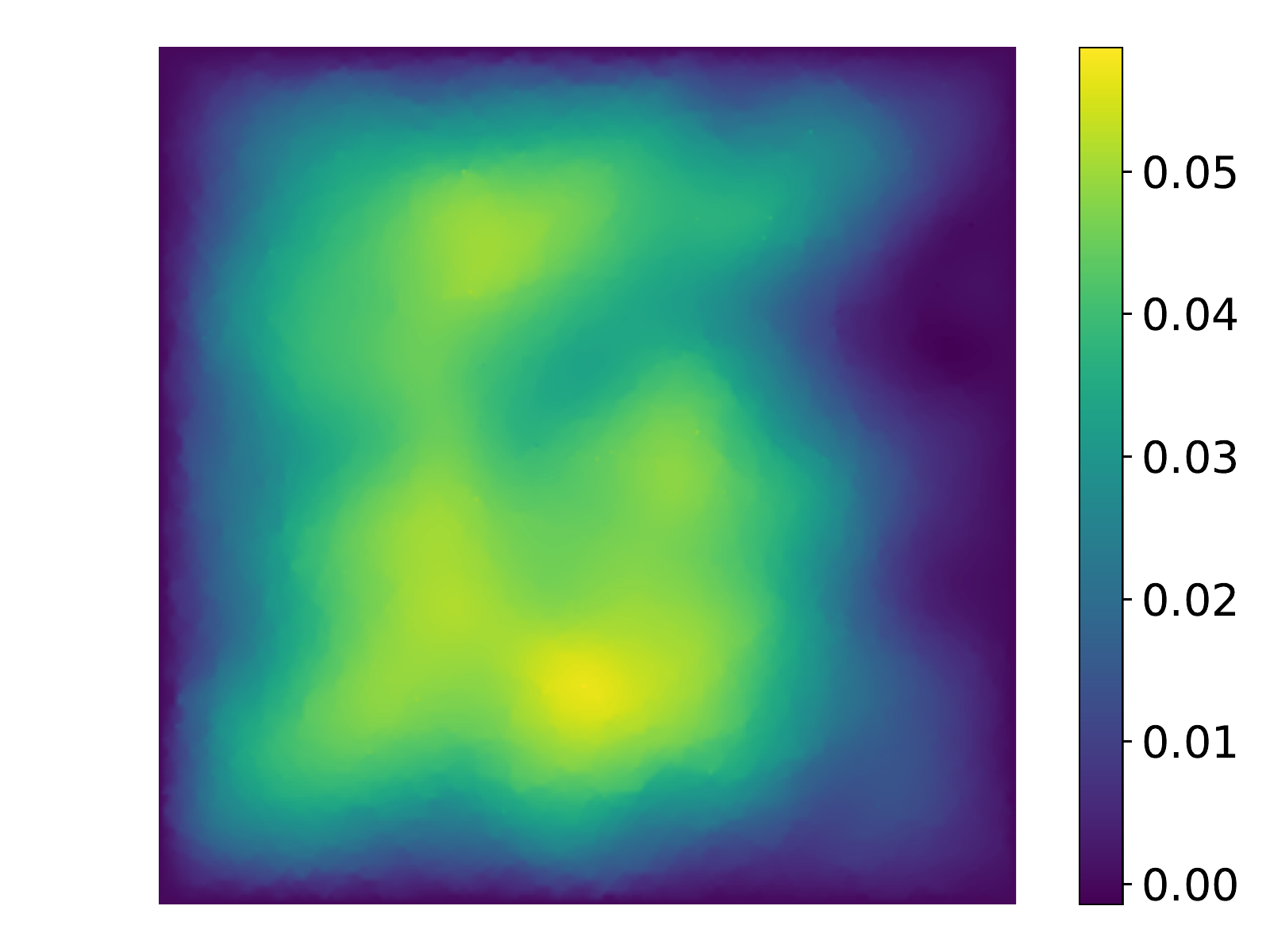}
		\caption{\small $X^N_{h}$.}
	\end{subfigure}
	~ 
	\begin{subfigure}[b]{0.44\textwidth}
		\includegraphics[width=\textwidth]{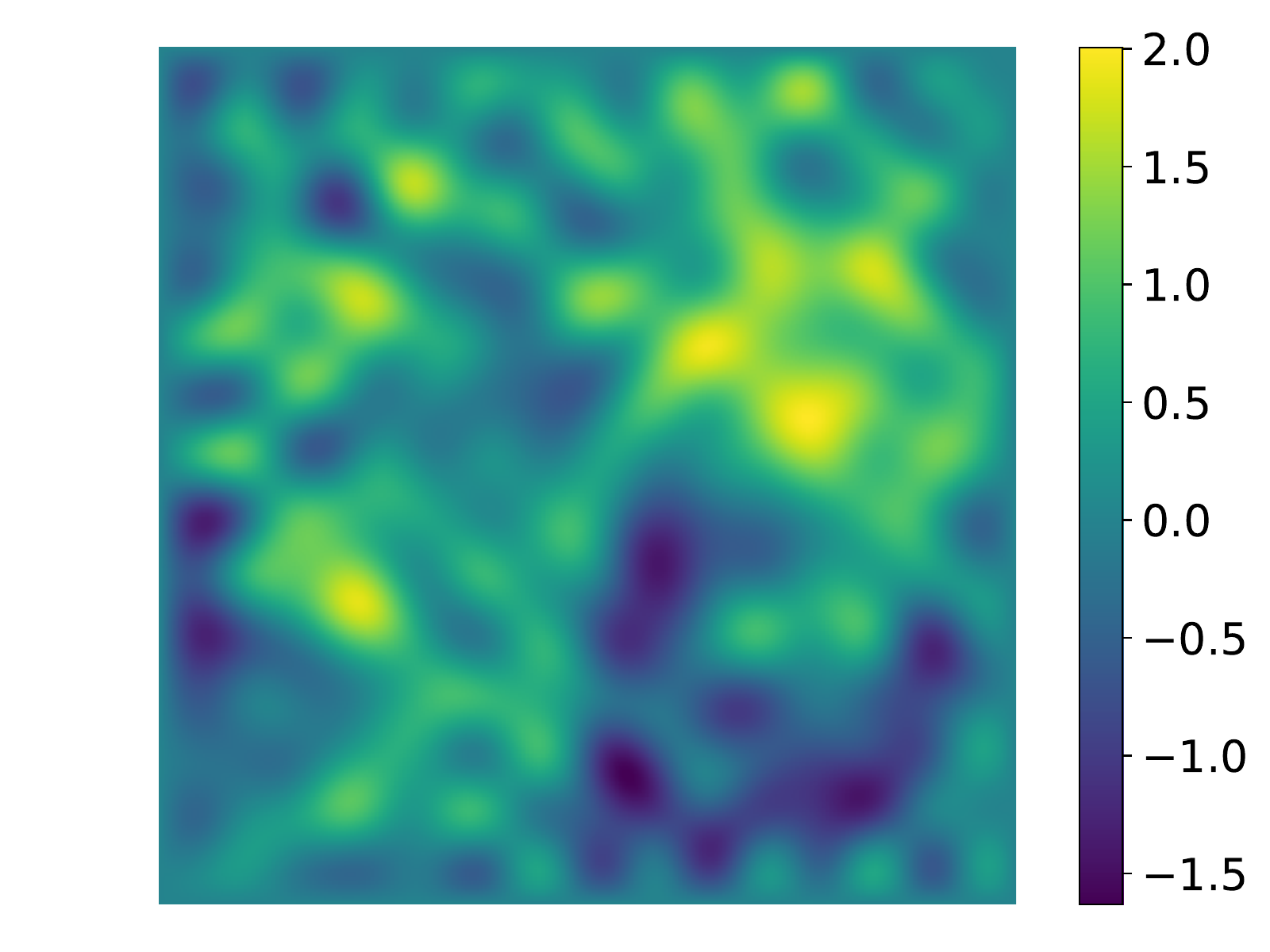}
		\caption{\small $W^\kappa(x,y,t_N)$.}
	\end{subfigure}
	\caption{\small The reference solution and the noise, plotted at final time $t_N=0.5$.
	}\label{fig:sol_noise}
\end{figure}

\begin{figure}
	\centering
	\begin{tikzpicture}
	
	\begin{axis}[
	scale=0.9,
	width=4.602in,
	height=3.1in,
	legend cell align={left},
	legend style={
		fill opacity=0.8,
		draw opacity=1,
		text opacity=1,
		at={(0.97,0.03)},
		anchor=south east,
		draw=lightgray204
	},
	log basis x={2},
	log basis y={2},
	tick align=outside,
	tick pos=left,
	x grid style={darkgray176},
	xlabel={\(\displaystyle H\)},
	xmajorgrids,
	xmin=0.0131390064883393, xmax=0.594603557501361,
	xmode=log,
	xtick style={color=black},
	y grid style={darkgray176},
	ylabel={\(\displaystyle E_{100}[\|X^N_{h} - X^{N}_{\mathrm{ms},\ell}\|]\)},
	ymajorgrids,
	ymin=0.00267162950801003, ymax=9.86845924751361,
	ymode=log,
	ytick style={color=black}
	]
	\addplot [semithick, steelblue31119180, mark=x, mark size=3, mark options={solid}]
	table {%
		0.5 5.38758152
		0.25 1.4472127
		0.125 0.363989834
		0.0625 0.0755056519
		0.03125 0.0126911928
		0.015625 0.00388091473
	};
	\addlegendentry{LOD}
	\addplot [semithick, forestgreen4416044, mark=*, mark size=3, mark options={solid}]
	table {%
		0.5 6.7934672
		0.25 3.71578719
		0.125 2.88766741
		0.0625 2.69144673
		0.03125 2.56982106
		0.015625 2.18550044
	};
	\addlegendentry{FEM}
	\addplot [semithick, sienna1408675, opacity=0.4, dashed]
	table {%
		0.5 5
		0.25 2.5
		0.125 1.25
		0.0625 0.625
		0.03125 0.3125
		0.015625 0.15625
	};
	\addlegendentry{$\mathcal{O}(H)$}
	\addplot [semithick, crimson2143940, opacity=0.4, dashed]
	table {%
		0.5 5
		0.25 1.25
		0.125 0.3125
		0.0625 0.078125
		0.03125 0.01953125
		0.015625 0.0048828125
	};
	\addlegendentry{$\mathcal{O}(H^2)$}
	\end{axis}
	
	\end{tikzpicture}
	
	\caption{The strong error at final time $t_N=0.5$ between the multiscale approximation $X^{N}_{\mathrm{ms}, \ell}$ and the reference solution $X^N_{h}$ computed on the fine grid. The norm is estimated as $\|\cdot\|_{L_2(\Omega;L_2)} \approx E_{100}[\|\cdot\|]$, i.e., by a Monte-Carlo estimator with $M=100$ samples.\label{fig:strong_error}}
\end{figure}
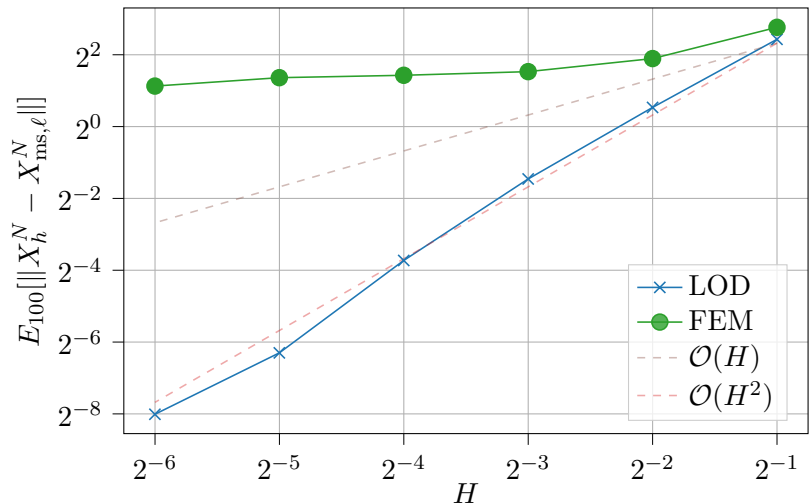

\subsection{Weak convergence}
\label{s:weak_convergence}

We study the convergence of the weak error between the expectation of the reference solution $\E{\Xhn}$ and its corresponding multiscale approximation. In order to compare the performance between the standard Monte-Carlo and the multilevel Monte-Carlo estimators, we fix $H = H_J$ for the standard Monte-Carlo estimation and denote by $X^n_{\mathrm{ms},\ell,J}$ the LOD approximation on the coarse grid with mesh size $H_J$. The term $\E{X^n_{\mathrm{ms}, \ell, J}}$ is estimated by applying a standard Monte-Carlo estimator and a multilevel Monte-Carlo estimator, respectively, in combination with the LOD method. To simplify the computations, we lower the variance from the previous example by choosing $\lambda_{m,n} = 5^{-2}(m^{2+0.01} + n^{2+0.01})^{-1}$. 
\begin{remark}
	The expectation of the solution to~\eqref{eq:weak_model_problem} does not in practice require an estimation via Monte-Carlo simulation. Due to the linearity, it holds that the expectation $\E{X}$ satisfies the deterministic equation
	\begin{align}
		\d\E{X(t)} + \Lambda\E{X(t)}\, \dt = f(t)\, \dt.
		\label{eq:E_equation}
	\end{align}
	However, as shown in Section~\ref{s:weak_error}, the Monte-Carlo estimator can be used to approximate any Lipschitz continuous function of the solution to~\eqref{eq:weak_model_problem}. In the example, we use the identity $g(X^n_{\mathrm{ms},\ell,J}) = X^n_{\mathrm{ms},\ell,J}$, since the reference solution can be computed using~\eqref{eq:E_equation}, which removes the issue of requiring a Monte-Carlo estimation for the reference solution.
\end{remark}

\begin{figure}
	\centering
	\includegraphics[width=0.9\textwidth]{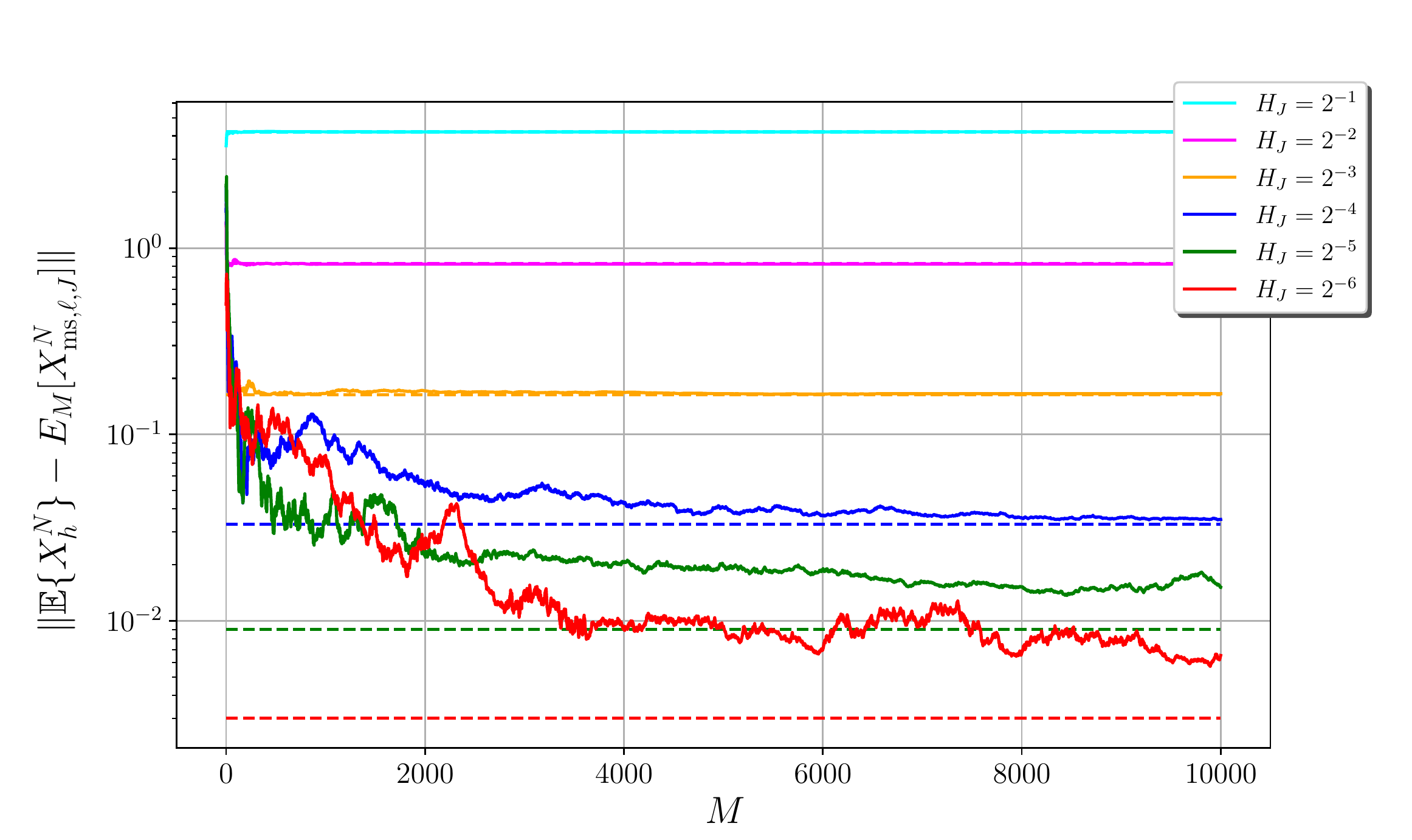}
	\caption{The error for the Monte-Carlo estimator of the LOD approximation as function of the number of samples $M$. The dashed lines indicate the discretization error for each coarse mesh size $H_J$, respectively.\label{fig:weaK_error_M}}
\end{figure}

\begin{figure}
	\centering
	\begin{tikzpicture}
	\begin{axis}[
	scale=1,
	width=4.602in,
	height=3.106in,
	legend cell align={left},
	legend style={
		fill opacity=0.8,
		draw opacity=1,
		text opacity=1,
		at={(0.97,0.03)},
		anchor=south east,
		draw=lightgray204
	},
	log basis x={2},
	log basis y={2},
	tick align=outside,
	tick pos=left,
	x grid style={darkgray176},
	xlabel={\(\displaystyle H_J\)},
	xmajorgrids,
	xmin=0.0131390064883393, xmax=0.594603557501361,
	xmode=log,
	xtick style={color=black},
	y grid style={darkgray176},
	ylabel={\Large Weak error},
	ymajorgrids,
	ymin=0.00122190562660053, ymax=8.94618720660721,
	ymode=log,
	ytick style={color=black}
	]
	\addplot [semithick, darkorange25512714, mark=triangle*, mark size=3, mark options={solid,rotate=270}]
	table {%
		0.5 3.8
		0.25 0.95
		0.125 0.26
		0.0625 0.0595
		0.03125 0.01498
		0.015625 0.003
	};
	\addlegendentry{LOD-MLMC}
	\addplot [semithick, steelblue31119180, mark=x, mark size=3, mark options={solid}]
	table {%
		0.5 3.8
		0.25 1
		0.125 0.202
		0.0625 0.0886
		0.03125 0.0151
	};
	\addlegendentry{LOD-MC}
	\addplot [semithick, forestgreen4416044, mark=*, mark size=3, mark options={solid}]
	table {%
		0.5 5.97
		0.25 3.45
		0.125 2.84
		0.0625 2.63
		0.03125 2.29
	};
	\addlegendentry{FEM-MC ($H_J$)}
	\addplot [semithick, mediumpurple148103189, opacity=0.4, dashed]
	table {%
		0.5 1.25
		0.25 0.625
		0.125 0.3125
		0.0625 0.15625
		0.03125 0.078125
		0.015625 0.0390625
	};
	\addlegendentry{$\mathcal{O}(H_J)$}
	\addplot [semithick, sienna1408675, opacity=0.4, dashed]
	table {%
		0.5 1.875
		0.25 0.46875
		0.125 0.1171875
		0.0625 0.029296875
		0.03125 0.00732421875
		0.015625 0.0018310546875
	};
	\addlegendentry{$\mathcal{O}(H_J^2)$}
	\end{axis}
	
	\end{tikzpicture}
	\caption{The weak error as function of the coarse mesh size $H_J$. The number of samples are set to $M_J = \gamma H_J^{-4}$ with $\gamma=0.01$ for all Monte-Carlo estimators. For the multilevel estimator, the samples are set to $\Mtilde_0\ = \gamma H_J^{-4}$ and $\Mtilde_j\ =\ \Mtilde_0H_j^4\cdot 2^{2\delta j}$ for $j=1,\ldots,J$ with $\delta=1$.\label{fig:weak_error_H}}
	\vspace{1cm}
	\begin{tikzpicture}
	
	\begin{axis}[
	width=4.602in,
	height=3.106in,
	legend cell align={left},
	legend style={fill opacity=0.8, draw opacity=1, text opacity=1, draw=lightgray204},
	log basis x={2},
	log basis y={10},
	tick align=outside,
	tick pos=left,
	x grid style={darkgray176},
	xlabel={\(\displaystyle H_J\)},
	xmajorgrids,
	xmin=0.0131390064883393, xmax=0.594603557501361,
	xmode=log,
	xtick style={color=black},
	y grid style={darkgray176},
	ylabel={\Large Time [s]},
	ymajorgrids,
	ymin=1.15418785140679, ymax=27981675.5657545,
	ymode=log,
	ytick style={color=black}
	]
	\addplot [semithick, finefem, mark=diamond*, mark size=3, mark options={solid}]
	table {%
		0.5 77
		0.25 231
		0.125 3157
		0.0625 50512
		0.03125 807422
		0.015625 12918444
	};
	\addlegendentry{FEM-MC $(h)$}
	\addplot [semithick, steelblue31119180, mark=x, mark size=3, mark options={solid}]
	table {%
		0.5 2.5
		0.25 8.1
		0.125 33
		0.0625 278
		0.03125 5548
		0.015625 1212167
	};
	\addlegendentry{LOD-MC}
	\addplot [semithick, darkorange25512714, mark=triangle*, mark size=3, mark options={solid,rotate=270}]
	table {%
		0.5 2.5
		0.25 10.6
		0.125 43
		0.0625 188
		0.03125 1468
		0.015625 20380
	};
	\addlegendentry{LOD-MLMC}
	\end{axis}
	
	\end{tikzpicture}
	
	\caption{The running time for the different approaches of computing an estimation of the quantity $\E{X^N_{\mathrm{ms}, \ell, J}}$. The samples for the FEM-MC and LOD-MC estimators are chosen as $M_J = \gamma H_J^{-4}$, and for the LOD-MLMC estimator the samples are chosen as $\Mtilde_0\ = \gamma H_J^{-4}$ and $\Mtilde_j\ =\ \Mtilde_0H_j^4\cdot 2^{2\delta j}$, with parameters $\gamma=0.01$ and $\delta=1$. \label{fig:cost}}
\end{figure}

In Section~\ref{s:weak_error}, it was shown how the Monte-Carlo error consists of the weak error and a statistical part, respectively. As an example, we compute $E_{M}[X^N_{\mathrm{ms},\ell,J}]$ for $H_J = 2^{-(J+1)}$, $J=0,\ldots,5$, with $M=10000$ samples on each mesh size, and show how the total error decays as a function of $M$. This is illustrated in Figure~\ref{fig:weaK_error_M}, where we furthermore have included dashed horizontal lines that indicate the size of the deterministic error for each coarse mesh size. Note that after 10000 samples the statistical error has vanished for $H_J=2^{-1},\ldots,2^{-4}$, while it is still present for $H_J=2^{-5},2^{-6}$. As pointed out in Section~\ref{s:weak_error}, the weak and statistical errors are balanced by choosing the samples of the Monte-Carlo estimator proportional to the mesh size as $M_J \sim H_J^{-4}$. 

Next, we compute an estimation of $\E{X^N_{\mathrm{ms},\ell,J}}$ using standard Monte-Carlo estimation and multilevel Monte-Carlo estimation, respectively, where the number of samples is chosen as a function of the coarse mesh size $H_J$. In the following, we set $\gamma=0.01$ as a scaling parameter for the number of samples. For the standard Monte-Carlo estimator $E_{M_J}[X^n_{\mathrm{ms},\ell,J}]$ we use $M_J = \gamma H_J^{-4}$ number of samples. This is done for coarse mesh sizes $H_J = 2^{-(J+1)}$, $J=0,\ldots,4$. For the multilevel Monte-Carlo estimator $E^J[X^n_{\mathrm{ms},\ell,J}]$ we set $\Mtilde_0\ = \gamma H_J^{-4}$ and let $\Mtilde_j\ =\ \Mtilde_0H_j^4\cdot 2^{2\delta j}$, for $j=1,\ldots, J$, with $\delta = 1$. This is done for mesh sizes $H_J =  2^{-(J+1)}$, $J=0,\ldots,5$. The convergence of the weak error for both estimators is illustrated in Figure~\ref{fig:weak_error_H}. For comparison, we have furthermore included the Monte-Carlo estimation of the finite element solution based on the coarse grid, i.e., $E_{M_J}[X^N_{H_J}]$ with $M_J = \gamma H_J^{-4}$. 

In the figure, we first of all note that both the Monte-Carlo and multilevel Monte-Carlo estimator of the LOD solution converge with quadratic rate, as predicted in Section~\ref{s:weak_error}. It is moreover shown that the finite element solution on the coarse grid is unable to reach the region of convergence as a consequence of not resolving the variations in the diffusion.

\subsection{Computational complexity}
\label{s:computational_complexity}

We conclude with a note on the computational complexity of computing $\E{X^N_{\mathrm{ms},\ell, J}}$ using the Monte-Carlo estimator and the multilevel estimator for the LOD method, and compare it with the standard Monte-Carlo estimation of the finite element method on the fine grid. As an example, we wish to compute the solution on an under-resolved grid with mesh size $H_J$ and let $h$ be sufficiently fine, i.e., it resolves the variations on the scale $\varepsilon$.

To simplify the presentation, we use the following notations. By $\mathcal{W}_{H_j}$ and $\mathcal{W}_h$ we denote the computational cost of solving a sparse matrix system defined by mesh sizes $H_j$ and $h$, respectively. Furthermore, $\mathcal{W}_{h,K}^\ell$ denotes the cost of computing a matrix system on the fine scale $h$ localized to the patch $N^\ell(K)$. Recall that the construction of the multiscale space $\Vmsl$ consists of evaluating the basis correction $\Rfl \phi_i$ for every node $x_i \in \NH$. In practice, this is done by solving $2^d-1$ number of fine (localized) system for each element $K\in \KH$.  That is, the computational cost of evaluating the multiscale space $\Vmsl$ based on the coarse mesh size $H_j$ is $(2^d-1)\cdot |\mathcal{K}_{H_j}| \cdot \mathcal{W}_{h,K}^\ell =: \mathcal{V}^\ell_j$ (see \cite{EngHMP19} for a more detailed discussion). However, we emphasize that this cost can be seen as an offline computation, and becomes of negligible size as the number of time steps $N$ and samples $M$ grow.

\begin{enumerate}[start=1, wide = 0pt, leftmargin = 0em, parsep=0.5em]
	\item[\bf FEM-MC:] For the Monte-Carlo estimation, we require $M_J \sim H^{-4}_J$ number of samples of $X^N_h$. For each sample, one must solve the system~\eqref{eq:fully_discrete_galerkin} $N$ number of times. In total, it is necessary to solve $M_J\cdot N$ matrix systems defined on the fine grid with mesh size $h$. The computational cost is thus $M_J\cdot N \cdot \mathcal{W}_h$.
	\item[\bf LOD-MC:] The Monte-Carlo estimator requires $M_J\sim H^{-4}_J$ number of samples $X^N_{\mathrm{ms},\ell,J}$. To compute the samples, it is first necessary to construct the multiscale space $\Vmsl$. We emphasize that once this is done, the multiscale space $\Vmsl$ can be re-used in each time step and for each sample. In total, we are required to solve for $\Vmsl$ once, followed by solving $M_J\cdot N$ matrix systems posed on $\Vmsl$ defined by the coarse mesh size $H_J$. The total cost is thus $\mathcal{V}^\ell_J + M_J\cdot N \cdot \mathcal{W}_{H_J}$.
	\item[\bf LOD-MLMC:] For the multilevel estimator, we are first required to construct the multiscale spaces $\Vmsl$ for the coarse mesh sizes $H_j$, $j=0,\ldots,J$. Next, the estimator $E_{\Mtildelow_0}[X^N_{\mathrm{ms},\ell,0}]$ is computed, which requires $\Mtilde_0\ \sim H_J^{-4}$ samples of $X^N_{\mathrm{ms},\ell,0}$. At last, the estimators $E_{\Mtildelow_j}[X^N_{\mathrm{ms},\ell,j} - X^N_{\mathrm{ms},\ell,j-1}]$ for $j=1,\ldots,J$ are evaluated, which is done by computing $\Mtilde_j\ \sim H_J^{-4}H_j^{4}\cdot 2^{2\delta j}$ samples of $X^N_{\mathrm{ms},\ell,j}$ and $X^N_{\mathrm{ms},\ell,j-1}$, respectively. Recall that one sample of $X^N_{\mathrm{ms},\ell,j}$ corresponds to $N$ sparse matrix systems defined by the grid size $H_j$. In total, this adds to the computational cost 
	\begin{align}
		\sum_{j=0}^{J} \mathcal{V}^\ell_j + (\Mtilde_j + \Mtilde_{j+1}) \cdot N \cdot \mathcal{W}_{H_j},
	\end{align}
	where we have set $\Mtilde_{J+1}\ =0$. 
\end{enumerate}

Note that, although the FEM-MC and LOD-MC methods require the same number of samples $M_J\sim H_J^{-4}$, the LOD-MC computes the samples on the coarse scale $H_J$ while FEM-MC does so on the fine scale $h$. We emphasize that the LOD-MLMC estimation is even more advantageous, since the number of samples computed on the scale $H_J$ is always $\Mtilde_J \sim 2^{2\delta J}$, i.e., independent of the total number of samples. To illustrate the difference, the computational time for the simulations from the previous subsection were logged. This was done by taking the average computational time for one sample for each method, and multiplying it by the total number of samples. That is, for FEM-MC and LOD-MC, the number of samples are $M_J = \gamma H_J^{-4}$, and for the LOD-MLMC estimation we have $\Mtilde_0\ = \gamma H_J^{-4}$ and $\Mtilde_j\ =\ \Mtilde_0H_j^4\cdot 2^{2\delta j}$, with $\gamma= 0.01$ and $\delta= 1$. The computational times were computed for coarse mesh sizes $H_J=2^{-1},\ldots,2^{-6}$ and can be seen in Figure~\ref{fig:cost}. Note that, for instance, the LOD-MC was never computed in the case $H_J=2^{-6}$ in the previous subsection, due to computational limitations. However, since the computational time for one sample is known, it is possible to estimate the total time that is required for the entire estimation. In the figure, we note that the computational time for FEM-MC is magnitudes larger than each LOD-based method. The contrast between LOD-MC and LOD-MLMC is small for coarse mesh sizes, but as $H_J$ decreases the LOD-MLMC method shows significant advantage.

\bibliographystyle{abbrv}
\bibliography{References}

\end{document}